\newcommand{\eps}{\varepsilon}
\newcommand{\de}{\delta}
\newcommand{\be}{\beta}
\newcommand{\ga}{\gamma}
\newcommand{\la}{\lambda}
\def \N{\mathbb{N}}
\newcounter{propcounter}
\theoremstyle{plain}
\newtheorem{thm}{Theorem}[section]
\newtheorem{theorem}[thm]{Theorem}
\newtheorem{conj}[thm]{Conjecture}
\newtheorem{lemma}[thm]{Lemma}
\newtheorem{corollary}[thm]{Corollary}
\newtheorem{proposition}[thm]{Proposition}
\theoremstyle{definition}
\newtheorem{question}[thm]{Question}
\newtheorem{remark}[thm]{Remark}
\newtheorem{definition}[thm]{Definition}
\newtheorem{claim}[thm]{Claim}
\newtheorem{fact}[thm]{Fact}
\newtheorem{example}[thm]{Example}
\newtheorem{defn-thm}[thm]{Definition-Theorem}
\numberwithin{equation}{section}
\newcommand{\btheorem}{\begin{theorem}}
\newcommand{\etheorem}{\end{theorem}}
\newcommand{\bconjecture}{\begin{conjecture}}
\newcommand{\econjecture}{\end{conjecture}}
\newcommand{\bproposition}{\begin{proposition}}
\newcommand{\eproposition}{\end{proposition}}
\newcommand{\bdefinition}{\begin{definition}}
\newcommand{\edefinition}{\end{definition}}
\newcommand{\bcorollary}{\begin{corollary}}
\newcommand{\ecorollary}{\end{corollary}}
\newcommand{\pr}{\begin{proof}}
\newcommand{\oof}{\end{proof}}
\newcommand{\bclaim}{\begin{claim}}
\newcommand{\eclaim}{\end{claim}}
\newcommand{\bquestion}{\begin{question}}
\newcommand{\equestion}{\end{question}}
\newcommand{\bfact}{\begin{fact}}
\newcommand{\efact}{\end{fact}}
\newcommand{\bremark}{\begin{remark}}
\newcommand{\eremark}{\end{remark}}
\newcommand{\eexample}{\end{example}}
\newcommand{\bexample}{\begin{example}}
\newcommand{\ma}{\end{lemma}}
\newcommand{\lem}{\begin{lemma}}
\newcommand{\mf}[1]{\mathbf{#1}}
\newcommand{\overf}[1]{\overleftarrow{\mathbf{#1}}}
\title{Compatible Powers of Hamilton Cycles in Dense Graphs}
\author{Xiaohan Cheng, Jie Hu, Donglei Yang\thanks{Data Science Institute, Shandong University, Shandong, China. Email: {\tt dlyang@sdu.edu.cn}.}}
\date{}
\def\th@plain{%
  \upshape 
}
\renewenvironment{proof}[1][\proofname]{\par
  \pushQED{\qed}%
  \normalfont \topsep6\p@\@plus6\p@\relax
  \trivlist
  \item[\hskip\labelsep
        \bfseries
    #1\@addpunct{.}]\ignorespaces
}{%
  \popQED\endtrivlist\@endpefalse
}
\begin{document}
\baselineskip 0.56cm
\maketitle
\begin{abstract}
\vspace{3mm}
Motivated by the concept of transition system investigated by Kotzig in 1968, Krivelevich, Lee and Sudakov proposed a more general notion of incompatibility system to formulate the robustness of Hamiltonicity of Dirac graphs.
Given a graph $G=(V,E)$, an {\em incompatibility system} $\mathcal{F}$ over $G$ is a family $\mathcal{F}=\{F_v\}_{v\in V}$ such that for every $v\in V$, $F_v$ is a family of edge pairs in $\{\{e,e'\}: e\ne e'\in E, e\cap e'=\{v\}\}$.
An incompatibility system $\mathcal{F}$ is \emph{$\Delta$-bounded} if for every vertex $v$ and every edge $e$ incident with $v$, there are at most $\Delta$ pairs in $F_v$ containing $e$.
A subgraph $H$ of $G$ is \emph{compatible} (with respect to $\mathcal{F}$) if every pair of adjacent edges $e,e'$ of $H$ satisfies $\{e,e'\} \notin F_v$, where $v=e\cap e'$.
Krivelevich, Lee and Sudakov proved that there is an universal constant $\mu>0$ such that
for every $\mu n$-bounded incompatibility system $\mathcal{F}$ over a Dirac graph,
there exists a compatible Hamilton cycle, which resolves a conjecture of
H\"{a}ggkvist from 1988. We study high powers of Hamilton cycles in this context and show that for every $\gamma>0$ and $k\in\N$, there exists a constant $\mu>0$ such that for sufficiently large $n\in\mathbb{N}$ and every $\mu n$-bounded incompatibility system over an $n$-vertex graph $G$ with $\delta(G)\ge(\frac{k}{k+1}+\gamma)n$, there exists a compatible $k$-th power of a Hamilton cycle in $G$. Moreover, we give a construction which has minimum degree $\frac{k}{k+1}n+\Omega(n)$ and contains no compatible $k$-th power of a Hamilton cycle.

~

\noindent {\textbf{Keywords}: Incompatibility system; Compatible subgraph; Powers of Hamilton cycles.}

\end{abstract}

\section{Introduction}
The classical theorem of Dirac \cite{dirac1952some} asserts that every graph of order $n\ge 3$ and minimum
degree at least $\frac{n}{2}$ contains a \emph{Hamilton cycle}, that is, a cycle passing through each vertex in the graph. Hamilton cycle is a very important
and extensively studied notion in graph theory.
Also, Dirac's theorem is a cornerstone result in extremal graph theory, and it has been generalized in several directions (see e.g.~\cite{cuckler2009hamiltonian,ferber2017counting,KLS01,kuhn2013optimal,sarkozy2003number}). One fruitful area is to establish the existence, under certain (minimum) degree conditions, of more general spanning graphs other than a Hamilton cycle. A remarkable direction is P\'osa and Seymour's conjecture on the existence of powers of Hamilton cycles. For $k\in\N$, the \emph{$k$-th power} of a Hamilton cycle is defined
as a graph on the same vertex set whose edges join distinct vertices at distance at most $k$ in the Hamilton cycle.
\begin{conj}\cite{posa60,seymour1973problem}\label{PS}
Let $G$ be a graph on $n$ vertices. If $\de(G)\ge \frac{k}{k+1}n$, then $G$ contains the $k$-th power of a Hamilton cycle.
\end{conj}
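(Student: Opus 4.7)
Since Conjecture 1.1 is as stated a famous open problem resolved only asymptotically, a realistic proof proposal targets sufficiently large $n$ and follows the regularity-blow-up framework of Koml\'os, S\'ark\"ozy and Szemer\'edi. The case $k=1$ is Dirac's theorem, so assume $k\ge 2$ and $n$ is large.

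The plan is to apply Szemer\'edi's Regularity Lemma to $G$ to obtain an $\eps$-regular equipartition $V_0, V_1, \ldots, V_t$ with $|V_0|\le\eps n$ and common cluster size $m$. Let $R$ denote the reduced graph on $[t]$ whose edges record $\eps$-regular pairs of density at least some fixed $d>0$. A routine degree-inheritance argument yields $\de(R)\ge \bigl(\frac{k}{k+1}+o(1)\bigr)t$, so the Hajnal-Szemer\'edi theorem furnishes an almost-perfect $K_{k+1}$-factor in $R$, covering all but a bounded number of clusters. Within each $K_{k+1}$-clique of clusters, after a small trimming to a super-regular $(k+1)$-partite subgraph, the Blow-up Lemma embeds a long $k$-th power of a path that cyclically visits the $k+1$ sub-clusters in a rotating pattern.

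Before the main embedding I would also construct, in a preliminary absorbing phase, a short $k$-th power of a path $P_{\text{abs}}$ of length $o(n)$ with the following property: for every sufficiently small set $W\subseteq V(G)$ of exceptional vertices, there is a $k$-th power of a path on $V(P_{\text{abs}})\cup W$ sharing the same endpoints as $P_{\text{abs}}$. Such a path is built from many vertex-disjoint local absorbing gadgets; for each gadget one needs, for a prospective absorbee $v$, many $(k+1)$-cliques in $G$ containing $v$ in a prescribed configuration, and the minimum degree $\frac{k}{k+1}n$ guarantees enough such cliques via a standard Hajnal-Szemer\'edi-type counting.

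The principal obstacle is the concatenation step, because the $k$-th power condition forces the last $k$ vertices of one clique-segment to be pairwise adjacent in $G$ to the first $k$ vertices of the next. To overcome this, I would arrange the cliques of the factor in a cyclic order using the connectivity of $R$, and insert short bridging $k$-th powers of paths between consecutive cliques using the regularity of inter-cluster pairs together with the minimum degree slack --- this is a ``connecting lemma'' tailored to $k$-th powers and is the delicate combinatorial heart of the proof. Concatenating the bridged clique-segments together with $P_{\text{abs}}$, and then invoking the absorbing property to swallow $V_0$ and any leftover vertices from the almost-perfect factor, yields the desired $k$-th power of a Hamilton cycle in $G$.
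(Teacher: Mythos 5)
This statement is not proved in the paper at all: it is the P\'osa--Seymour conjecture, stated as a conjecture and attributed to \cite{posa60,seymour1973problem}, with the paper merely citing Koml\'os, S\'ark\"ozy and Szemer\'edi \cite{Komlos1998On} for its resolution \emph{for sufficiently large $n$}. So there is no in-paper proof to compare yours against, and the first gap in your proposal is one you acknowledge yourself: the conjecture as written asserts the conclusion for every $n$-vertex graph with $\de(G)\ge \frac{k}{k+1}n$, whereas your argument (like the published one) only works for $n$ large relative to $k$ and the regularity parameters. For small and moderate $n$ the statement remains open, so what you outline cannot be a proof of the statement as posed.

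Even restricted to large $n$, your write-up is a roadmap rather than a proof. The two steps you correctly identify as the heart of the matter --- the connecting lemma that bridges consecutive $K_{k+1}$-clique segments by short $k$-th powers of paths, and the construction of an absorbing path $P_{\text{abs}}$ whose gadgets can swallow an arbitrary small leftover set --- are precisely the steps you do not carry out. In particular, bridging requires the ends of a segment (a $k$-tuple spanning a $K_k$) to have many common extensions, and arranging the cliques of the reduced-graph factor ``in a cyclic order using the connectivity of $R$'' is not enough: one needs a quantitative statement (as in Lemma 6.1 of the present paper, quoted from \cite{Komlos1998On}) that any two such $K_k$-ends can be joined by a $k$-th power of a path of bounded length, and that enough vertex-disjoint such connectors exist to perform all $\Theta(1)$ (or, in the absorbing phase, up to $o(n)$) connections simultaneously. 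Likewise, the claim that each absorbing gadget exists ``via a standard Hajnal--Szemer\'edi-type counting'' hides the actual construction of a gadget that works for every possible small exceptional set $W$, not just for a single prescribed vertex. As an outline your proposal faithfully reflects the known strategy (regularity plus blow-up in \cite{Komlos1998On}, absorption in the style of \cite{levitt2010avoid}), but it does not constitute a proof, and no proof of the full conjecture for all $n$ is currently known.
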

After two decades and several
papers on this question, Koml\'{o}s, S\'{a}rk\"{o}zy and Szemer\'{e}di \cite{Komlos1998On} resolved this conjecture for large $n$. 

In this paper, we are interested in a ``local resilience'' version of Conjecture~\ref{PS} under the incompatibility system, suggested by Krivelevich, Lee and Sudakov \cite{KLS01}.
\begin{definition}\label{system} (Incompatibility system). Let $G=(V,E)$ be a graph. An \emph{incompatibility system} $\mathcal{F}$ over $G$ is a family $\mathcal{F} = \{F_v\}_{v\in
V}$ such
that for every $v\in V$,
$F_v$ is a family of edge pairs in $\{\{e, e'\}:
e\cap e' = \{v\}\}$.
\begin{enumerate}
\item [(1)] For every two edges $e$ and $e'$, if there exists some vertex $v$ such that $\{e, e'\}\in F_v$,
then we say
that $e$ and $e'$ are
\emph{incompatible} at $v$. Otherwise, they are \emph{compatible}. A subgraph $H\subseteq G$ is \emph{compatible}
if all
pairs of adjacent edges are
compatible.
\item [(2)] For a positive integer $\Delta$, an incompatibility system $\mathcal{F}$ is
\emph{$\Delta$-bounded} if for
any vertex $v$ and every edge
$e$ incident with $v$, there are at most $\Delta$ other edges incident with $v$ that are incompatible with
$e$.
\item [(3)] Given constants $\mu,\delta>0$ and $n\in \N$, an\emph{ $(n, \delta, \mu)$-incompatibility system}
$(G, \mathcal{F})$
consists of an $n$-vertex
graph $G$ with $\delta(G)\geq \delta n$ and a $\mu n$-bounded incompatibility system $\mathcal{F}$ over
$G$.
\end{enumerate}
\end{definition}
The definition of incompatibility system is motivated by two concepts in graph theory.
First, it generalizes \emph{transition systems} introduced by Kotzig \cite{Kotzig1968} in 1968, where a transition system is a $1$-bounded
incompatibility system. Note that compatible subgraphs also generalize
the concept of properly colored subgraphs in edge-colored graphs, where two adjacent edges are incompatible if only they have the same color.

Krivelevich, Lee and Sudakov~\cite{KLS} first studied compatible Hamilton cycles in Dirac graphs and resolved in a very strong form, a conjecture of H\"aggkvist from 1988 (see Conjecture 8.40 in \cite{Bondy}).

\begin{theorem}\cite{KLS}
There exists a constant $\mu>0$ such that for large enough $n$, every $(n, \frac{1}{2}, \mu)$-incompatibility system contains a compatible Hamilton cycle.
\end{theorem}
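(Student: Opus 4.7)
The plan is to adapt the absorbing method to the compatibility setting. Fix $\mu \ll 1$ and an $(n,1/2,\mu)$-incompatibility system $(G,\F)$. The central observation is that since $\de(G)\ge n/2$ and $\F$ is $\mu n$-bounded, for any edge $e=uv$ and any set $S$ of neighbors of $v$, all but at most $\mu n$ edges $\{vw:w\in S\}$ are compatible with $e$ at $v$; so compatibility forbids only an $O(\mu)$-fraction of the natural extensions at every step, and every probabilistic or greedy argument that uses $\Omega(n)$-many choices at each step should survive.

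The first step is to build an \emph{absorbing path}. Call a $5$-vertex path $x_1x_2vx_3x_4$ a \emph{$v$-absorber} if both $x_1x_2vx_3x_4$ and $x_1x_2x_3x_4$ (using the shortcut edge $x_2x_3$) are compatible paths in $G$. A direct count based on $\de(G)\ge n/2$ together with the $\mu n$-bounded hypothesis shows that every vertex admits $\Omega(n^4)$ such absorbers. A probabilistic selection followed by a greedy cleanup then yields a short compatible path $P_{\mathrm{abs}}$ and a reservoir $R\subseteq V(G)\setminus V(P_{\mathrm{abs}})$ with the following absorbing property: for every $U\subseteq R$ with $|U|\le \eps n$ there is a compatible path on $V(P_{\mathrm{abs}})\cup U$ with the same endpoints as $P_{\mathrm{abs}}$.

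The second step is to cover all but $o(n)$ of the remaining vertices by a compatible cycle $C$ on $V(G)\setminus V(P_{\mathrm{abs}})$, with endpoints prescribed so as to match the endpoints of $P_{\mathrm{abs}}$ (which allows $P_{\mathrm{abs}}\cup C$ to be closed into a single compatible cycle after absorption). I would do this by a P\'osa-type rotation-extension argument adapted to compatibility: every rotation swaps one edge and therefore alters the adjacent-edge pairs at only two vertices of the current path, and since only an $O(\mu)$-fraction of rotations is blocked by incompatibility, the usual exponential growth of the endpoint set persists and ensures either extension, a new rotation, or closure. Finally, let $U$ be the leftover vertices not covered by $P_{\mathrm{abs}}\cup C$; since $|U|\le\eps n$, invoke the absorbing property of $P_{\mathrm{abs}}$ to swallow $U$, producing a compatible Hamilton cycle.

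The main obstacle is the second step. In the classical P\'osa argument a rotation merely reshuffles the path topologically, but in the compatible setting each rotation introduces two new adjacent-edge pairs whose compatibility must be verified; moreover, after many rotations one has to argue that a large \emph{and simultaneously compatible} family of endpoints is produced. Keeping the endpoint expansion and the compatibility bookkeeping in lock-step is delicate and is essentially where the admissible constant $\mu$ gets determined. A secondary difficulty is forcing $C$ to have prescribed endpoints matching those of $P_{\mathrm{abs}}$: this can be handled by a separate connecting lemma showing that, under $\de(G)\ge n/2$ and $\mu n$-boundedness, any two vertices are joined by many internally disjoint short compatible paths, which also lets us glue the pieces together without creating incompatibilities at the gluing vertices.
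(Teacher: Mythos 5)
First, a point of reference: the paper does not prove this statement; it is quoted from Krivelevich--Lee--Sudakov \cite{KLS}, and the machinery developed in the paper (Lemmas \ref{absorbing}--\ref{connecting1}) works only under the strictly stronger hypothesis $\delta(G)\ge(\frac{k}{k+1}+\gamma)n$. The extra $\gamma n$ is not cosmetic, and your proposal founders exactly where that slack is missing. Concretely, your $v$-absorber $x_1x_2vx_3x_4$ with shortcut edge $x_2x_3$ forces $\{v,x_2,x_3\}$ to span a triangle. The Dirac graph $K_{n/2,n/2}$ is triangle-free, so no vertex has even one such absorber, let alone the claimed $\Omega(n^4)$ of them; the ``direct count based on $\de(G)\ge n/2$'' is false at the exact threshold $\frac12$. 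With minimum degree $(\frac12+\gamma)n$ every neighbourhood spans $\Omega(n^2)$ edges and the count goes through, which is precisely why the paper's absorbing lemma carries the $+\gamma$; at $\frac12$ exactly, a routine absorption scheme of this shape cannot be the proof.

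The second step has the same problem in a different guise. At the exact Dirac threshold the graph can be extremal or near-extremal (complete bipartite, or two nearly disjoint cliques of size about $n/2$), and the ``usual exponential growth of the endpoint set'' under P\'osa rotations is not available there; indeed the classical proof of Dirac's theorem is the crossing-chord argument on a longest path, not rotation--extension with expansion. Moreover, ``only an $O(\mu)$-fraction of rotations is blocked'' is not a safe reduction: the rotations a given configuration actually requires may coincide with the $\mu n$ forbidden ones, and showing that enough compatible rotations survive simultaneously --- including a separate analysis of the near-extremal cases --- is essentially the entire content of the KLS paper, not a step one can defer. (Minor: your cycle $C$ ``with prescribed endpoints'' should be a path.) So the proposal correctly names the hard spots but supplies no mechanism for either of them, and its one concrete construction (the absorber) fails on a standard Dirac graph.
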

They further studied compatible Hamilton cycles in random graphs in \cite{krivelevich2016compatible} and proved that there is a constant $\mu > 0$ such that if $p\gg \frac{\log n}{n}$, then $G=G(n, p)$ a.a.s.\footnote{Given a non-trivial monotone graph property $\mathcal P$, we say $G(n, p)$ has property $\mathcal{P}$ \emph{asymptotically almost surely} (or \emph{a.a.s.} in short) if the
probability that $G(n, p)$ has $\mathcal{P}$ tends to $1$ as $n$ tends to infinity.} has the following property: every $\mu np$-bounded incompatibility system defined over $G$ contains a compatible Hamilton cycle, which strengthen the result about Hamilton cycles in $G(n,p)$ without restrictions of incompatibility systems. The notion of incompatibility system appears to provide a new and interesting take on the robustness of graph properties. We refer the reader to a survey of Sudakov~\cite{sudakov2017robustness} where various measures of robustness and relevant results are collected.

We further explore the degree conditions forcing the existence of compatible high powers of Hamilton cycles and prove the following result.
\begin{theorem}[Main Theorem]\label{main theorem} For every $\gamma > 0$ and $k\in \N$ with $k\ge 2$, there exists a
constant $\mu >
0$ such that for
sufficiently large $n$, every
$(n, \frac{k}{k+1} + \gamma, \mu)$-incompatibility system $(G, \mathcal{F})$ contains the compatible $k$-th power of a Hamilton cycle.
\end{theorem}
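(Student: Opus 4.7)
My plan is to lift the classical regularity--blow-up--absorption proof of the Komlós--Sárközy--Szemerédi theorem on $k$-th powers of Hamilton cycles to the incompatibility setting, upgrading every ingredient so that it respects $\mathcal{F}$. Throughout I fix a hierarchy $1/n \ll \mu \ll \eta \ll 1/M \ll \gamma, 1/k$; the final value of $\mu$ will be extracted at the end.

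The first step is a \emph{compatible absorbing lemma}. For each $v\in V(G)$ I would show that there are $\Omega(n^{2\ell})$ constant-size ``absorber gadgets'': compatible $k$-th powers of paths $P$ on $2\ell=\ell(k)$ vertices such that $V(P)\cup\{v\}$ also carries a compatible $k$-th power of a path with the same endpoints. Their existence follows from $\delta(G)\ge(\tfrac{k}{k+1}+\gamma)n$ together with the $\mu n$-boundedness, since each of the $O(1)$ greedy extension steps loses at most a $k\mu$-fraction of candidates. A probabilistic selection then yields a single compatible $k$-th power of a path $P_{\mathrm{abs}}$ on a small linear vertex set that absorbs any $\le\eta n$ leftover vertices compatibly while preserving its endpoints.

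Next, I set aside a random reservoir $R$ and apply the degree form of Szemerédi's regularity lemma to $G-V(P_{\mathrm{abs}})-R$. The reduced graph inherits minimum degree just above $\tfrac{k}{k+1}$, so by KSS on the reduced graph one obtains a cyclic sequence of super-regular $(k+1)$-tuples covering almost all clusters. Inside each super-regular $(k+1)$-tuple I would invoke a \emph{compatible blow-up lemma} to find an almost-spanning compatible $k$-th power of a path with endpoints in prescribed candidate sets, and then stitch consecutive powers of paths together through $R$ via short compatible connectors; these connectors exist greedily because the minimum degree furnishes large common neighborhoods and $\mu n$-boundedness prunes only an $O(\mu)$ fraction. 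At the end, the $O(\eta n)$ uncovered vertices are absorbed through $P_{\mathrm{abs}}$, completing the compatible $k$-th power of a Hamilton cycle.

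The decisive---and hardest---step is the compatible blow-up lemma. In a vertex-by-vertex embedding into a super-regular $(k+1)$-tuple, each new placement must simultaneously satisfy the super-regularity candidate conditions for the next $k$ to-be-embedded vertices and be compatible with the $k$ already-placed neighbours along the power. Compatibility kills at most $k\mu n$ candidates per step, which is absorbed by the slack $\gamma$ provided $\mu$ is sufficiently small. The genuine difficulty is that candidate sets can collapse during the last $O(1/\eta)$ steps; I would handle this in the spirit of KSS by splitting each cluster into a bulk part (processed in a uniformly random order) and a small boundary buffer (processed last via a rewiring / random greedy procedure), so that atypical vertices are placed while many compatible choices remain. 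All remaining counting and connecting estimates are standard and survive the perturbation by $\mathcal{F}$ because every extremal bound degrades only by an additive $O(\mu)$.
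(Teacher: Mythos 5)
Your overall architecture (absorbers $+$ regularity $+$ reduced-graph Koml\'os--S\'ark\"ozy--Szemer\'edi $+$ short connectors through a reservoir) matches the paper's, but there is a genuine gap at precisely the point you treat as routine: the claim that each greedy extension or connection step ``loses at most a $k\mu$-fraction of candidates.'' When you append a new vertex $w$ to an end $(u_1,\dots,u_k)$ of a compatible $k$-th power of a path, compatibility must hold not only at each $u_i$ (which indeed costs only $O(k^2\mu n)$ candidates) but also \emph{at $w$ itself}: the back-edges $wu_1,\dots,wu_k$ must be pairwise compatible at $w$. An adversary can arrange that for \emph{every} $w$ in the common neighbourhood some pair $\{wu_i,wu_j\}$ is incompatible at $w$, using only one forbidden pair per vertex, so this is consistent even with a $1$-bounded system and defeats the greedy step entirely; the space-barrier construction in Section~1.1 of the paper is built on exactly this phenomenon, and the authors explicitly flag that it is unclear whether a given compatible $K_k$ can be extended at all. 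This is why the paper introduces the notion of a \emph{mate} and only ever extends or connects $k$-tuples that are guaranteed in advance to have $\Omega(n^k)$ mates; such well-extendable ends are produced not greedily but via the compatible counting lemma (Lemma~\ref{hlwy}/Corollary~\ref{co1}) inside regular tuples together with an averaging argument. Your absorber-gadget count, your connectors through $R$, and your blow-up embedding all lean on the flawed greedy step, so each needs this mate machinery (or an equivalent) to be repaired.

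A secondary divergence: the paper neither proves nor needs a compatible blow-up lemma. It only requires an \emph{almost} cover of each $(k+1)$-tuple of clusters by long compatible powers of paths whose ends have many mates (obtained from a hypergraph-cleaning argument applied to the compatible copies of $K_{k+1}$ supplied by the counting lemma), and it absorbs the leftover via a reservoir in which every reservoir vertex carries its own designated absorber already sitting on the final cycle. By aiming for a spanning compatible embedding inside each super-regular tuple you are taking on a substantially harder statement than the theorem requires, and your sketch of how to finish the last $O(1/\eta)$ embedding steps does not engage with the at-the-new-vertex compatibility obstruction described above.
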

In particular, the term $\gamma n$ in the minimum degree condition cannot be omitted by the constructions obtained in our previous work~\cite{hu2022graph}. We will give a simple construction for completeness.

\subsection{A space barrier}
We shall give an $(n, \frac{k}{k+1} + \frac{\mu}{2}, \mu)$-incompatibility system $(G, \mathcal{F})$ that contains no compatible $k$-th power of a Hamilton cycle.
Let $0<\mu<\frac{1}{2(k+1)},n\in (k+1)\N$ and $G_0$ be an $n$-vertex complete $(k+1)$-partite graph with parts $V_1,V_2,\ldots,V_{k+1}$ satisfying $|V_1|=\frac{n}{k+1}+1, |V_2|=\frac{n}{k+1}-1$ and $|V_i|=\frac{n}{k+1}$ for every $i\in\{3,\ldots,k+1\}$. Inside every part $V_i$ of $G_0$, we add a bipartite spanning subgraph with minimum degree at least $\frac{\mu n}{2}+1$ and maximum degree at most $\mu n$. Denote by $G$ the resulting graph. Hence, $\delta(G)\ge \left(1-\frac{1}{k+1}+\frac{\mu}{2}\right)n$ and for every $i\in[k+1]$, $G[V_i]$ is a triangle-free graph with $\delta(G[V_i])\ge \frac{\mu n}{2}+1$ and $\Delta(G[V_i])\le \mu n$.
Now we define an incompatibility system $\mathcal{F}$ over $G$. For every two different parts $V_i,V_j$ of $G$, let $v$ be any vertex in $V_i$ and $u,w$ be any two different vertices in $V_j$. If $uw$ is an edge in $G[V_j]$, then let $vu$ and $vw$ be incompatible at $v$. Since $\Delta(G[V_j])\le \mu n$, the resulting system $\mathcal{F}$ is $\mu n$-bounded. Furthermore, suppose for contradiction that $(G,\mathcal{F})$ contains a compatible $k$-th power of a Hamilton cycle. As $n\in (k+1)\N$, $(G,\mathcal{F})$ also contains a compatible $K_{k+1}$-factor, say $\mathcal{K}$. Then since $|V_1|=\frac{n}{k+1}+1=|\mathcal{K}|+1$ and each $G[V_i]$ is a triangle-free graph, by Pigeonhole Principle, there exists a compatible copy $K$ of $K_{k+1}$ in $\mathcal{K}$ which intersects $V_1$ on exactly two vertices, say $u_1$ and $w_1$. As $k\ge2$, $K$ also intersects another part except $V_1$, say $V_j$ for some $j\neq 1$, and choose $v_1\in V(K)\cap V_j$. Then $u_1v_1w_1$ is a compatible triangle, a contradiction.

The paper is organised as follows. In Section \ref{notation}, we set up some basic notation
and crucial lemmas. Then we present the proof of Theorem \ref{main theorem} in Section \ref{Pf of main}. Sections \ref{sectionabsorbing}, \ref{sectioncovering} and \ref{sectionconnecting} are devoted to proving Lemmas \ref{absorbing}, \ref{covering} and \ref{connecting1}, respectively.

\section{Notation and preliminaries} \label{notation}
Let $P_n$ be a path of order $n$. We use $P^k_{n}$ to denote the $k$-th power of $P_n$, where we often call $P_n$ the \emph{base} path. Given a copy of $P_{n}$, say $v_1v_2\ldots v_n$, we call the $k$-tuples $(v_k,\ldots,v_1)$ and $(v_{n-k+1},\ldots,v_{n})$ the \emph{ends} of $P_{n}$. More often, given a $k$-tuple $\mf{e}=(u_1,u_2,\ldots,u_k)$, we write $\overf{e}:=(u_k,u_{k-1},\ldots,u_1)$.

\begin{definition}\label{mate}  Let $G$ be an $n$-vertex graph and $\mathcal{F}$ be an
incompatibility system
over $G$. For every $k$-tuple $\mf{e}=(u_1,u_2,\ldots,u_k)$ such that $\{u_1,u_2,\ldots,u_k\}$ induces a compatible copy of $K_k$, we say $\mf{f}=(v_1,v_2,\ldots,v_k)$ is a \emph{mate} of $\mf{e}$ if $u_1\ldots u_{k}v_1\ldots v_{k}$ forms a compatible copy of $P^k_{2k}$ in $(G,\mathcal{F})$.
Denote by $M(\mf{e})$ the number of mates of $\mf{e}$ in $(G,\mathcal{F})$.
\end{definition}

\subsection{Absorption Strategy and main tools}
Our proof uses the absorption method, pioneered by the work of R\"odl, Ruci\'nski and Szemer\'edi~\cite{RRS} on perfect matchings in hypergraphs and here we shall follow the strategy in the work of Levitt, S\'{a}rk\"{o}zy and Szemer\'{e}di~\cite{levitt2010avoid} used for embedding powers of Hamilton cycles. The first major task is to define and find an absorbing structure in the host graph which can ``absorb'' any (small) set of left-over vertices. Here we adopt a weaker absorbing strategy in which we only need to absorb subsets of a fixed vertex set and a similar idea has previously appeared in a recent work of Chang--Han--Thoma~\cite{chang}. We will see that this weaker version is easier to handle. In particular, we will introduce a slightly stronger notion of absorbers to aid our proof.
\begin{definition}[$\beta$-absorber]\label{robust} Let $k\in \mathbb{N}$, and $G$ be an $n$-vertex graph
and
$\mathcal{F}$ is an
incompatibility system over $G$. For every $v\subseteq V$ and $\beta>0$, we say that a compatible copy $A$ of $P_{2k}^k$ is an
\emph{absorber }for $v$ if
$V(A)\cup\{v\}$ induces a compatible copy of
$P^k_{2k+1}$ which shares the same ends with $A$. In addition, if both ends have at
least $\beta n^{k}$ mates in $G$, then we call $A$ a $\beta$-\emph{absorber}.
\end{definition}
The first task is to ensure that every vertex has many $\be$-absorbers as above.
\begin{lemma}\label{absorbing}
Let $k\in \mathbb{N}$. For any $\gamma > 0$, there exist
$\beta_1,
\beta_2,\mu> 0$ such that for every sufficiently large $n$, if $(G, \mathcal{F})$ is an
$(n,\frac{k}{k+1} + \gamma, \mu)$-incompatibility system, then every vertex of $G$ has at least $\beta_1n$
vertex-disjoint
$\beta_2$-absorbers.
\end{lemma}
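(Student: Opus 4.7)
The plan is to first show that any fixed vertex $v \in V(G)$ lies in $\Omega(n^{2k})$ absorbers, and then extract $\be_1 n$ vertex-disjoint ones by a standard greedy argument. Recall that $(v_1, \ldots, v_{2k})$ is an absorber for $v$ iff the base $v_1 \ldots v_{2k}$ spans a compatible $P^k_{2k}$ and, with $v$ inserted between $v_k$ and $v_{k+1}$, the base $v_1 \ldots v_k\, v\, v_{k+1} \ldots v_{2k}$ spans a compatible $P^k_{2k+1}$. Concretely, $v$ must be adjacent to every $v_i$, the $2k$ edges $vv_i$ must be pairwise compatible at $v$, and at each $v_i$ the edge $vv_i$ must be compatible with every other edge of $P^k_{2k+1}$ meeting $v_i$.

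I count absorbers by placing $v_1, v_2, \ldots, v_{2k}$ one at a time. At step $i$, adjacency forces $v_i$ into the common neighborhood of $v$ and of $v_{\max(1,i-k)}, \ldots, v_{i-1}$ (at most $k+1$ already placed vertices), which has size at least $(k+1)\ga n$ by inclusion-exclusion on the hypothesis $\de(G) \ge \left(\frac{k}{k+1}+\ga\right)n$. The remaining constraints are the $O(k^2)$ compatibility conditions on pairs of edges of the putative $P^k_{2k+1}$ meeting at $v$, at some previously placed $v_j$, or at $v_i$ itself. Pairs meeting at $v$ or at a previously placed $v_j$ involve an edge already fixed and so, by the $\mu n$-boundedness of $\F$, forbid at most $\mu n$ choices of $v_i$ each. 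Pairs meeting at $v_i$, such as ``$vv_i$ compatible with $v_iv_j$ at $v_i$'' for an already placed $v_j$, can only be bounded on average in $v_j$; to handle them I apply the standard typicality trick: I restrict every previously placed vertex to the set of $w$ with $|\{u : \{vu, uw\}\in F_u\}| \le \sqrt{\mu}n$ (and similarly with $v$ replaced by each earlier placed vertex). A Markov-type bound, using $\sum_u \mu n \le \mu n^2$, omits only $O(\sqrt{\mu}n)$ vertices for each such typicality; after this restriction, every ``at $v_i$'' constraint removes at most $\sqrt{\mu}n$ candidates. Choosing $\mu$ small enough in terms of $\ga$ and $k$, each step retains $\Omega(n)$ valid choices, giving $\Omega(n^{2k})$ absorbers for $v$.

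The $\be_2$-absorber property then comes for free. For any compatible $K_k$-tuple $\mf{e}$, an identical greedy extension (picking at each of $k$ steps a new vertex in the common neighborhood of at most $k$ already chosen vertices, subject to the same type of compatibility forbiddings as above) shows $M(\mf{e}) \ge c n^k$ for a constant $c=c(k,\ga)>0$. Since both ends of any absorber constructed above are compatible copies of $K_k$, setting $\be_2 = c$ makes every such absorber automatically a $\be_2$-absorber.

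Finally I extract vertex-disjoint absorbers. Let $\A_v$ denote the resulting family of $\be_2$-absorbers for $v$, so $|\A_v| \ge c' n^{2k}$ for some $c'=c'(k,\ga)>0$. Each vertex of $G$ lies in at most $2k \cdot n^{2k-1}$ members of $\A_v$ (fix one of the $2k$ positions, vary the other $2k-1$), so a greedy selection that deletes the $2k$ vertices of each chosen absorber can continue as long as the number $t$ of already selected absorbers satisfies $4k^2 t n^{2k-1} < c' n^{2k}$, i.e.\ $t < c' n/(4k^2)$. Hence taking $\be_1 = c'/(8k^2)$ yields the required $\be_1 n$ pairwise vertex-disjoint $\be_2$-absorbers. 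The main difficulty throughout is the bookkeeping of compatibility constraints in the counting step, in particular the ``at $v_i$'' pairs that cannot be bounded pointwise and require the typicality restriction on previously placed vertices.
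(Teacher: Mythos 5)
Your first step (a vertex-by-vertex greedy count of absorbers for $v$, with the Markov/typicality restriction to handle incompatibilities located at the newly placed vertex) is a legitimate alternative to the paper's route through the Regularity Lemma and the compatible counting lemma, and your final greedy extraction is standard. The gap is in the middle step: the claim that \emph{every} compatible copy of $K_k$ satisfies $M(\mf{e})\ge c n^k$ is false, and it is exactly the difficulty the paper singles out (``it is still unclear to us whether any compatible copy of $K_k$ has a mate''). Concretely, take $k=2$, $G=K_n$, fix $u_1,u_2$, and set $F_w=\{\{wu_1,wu_2\}\}$ for every $w\notin\{u_1,u_2\}$. This system is $1$-bounded and $(u_1,u_2)$ spans a compatible $K_2$, yet it has \emph{no} mate: any candidate $(f_1,f_2)$ forces the incompatible pair $\{f_1u_1,f_1u_2\}$ at $f_1$. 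The reason your greedy extension breaks here is that the obstructing constraints live at the new vertex $f_j$ and involve \emph{two already fixed} vertices of $\mf{e}$; $\mu n$-boundedness only controls such pairs on average over $f_j$'s partner, and your typicality trick is unavailable because the coordinates of $\mf{e}$ are given, not chosen.

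Two repairs are possible. The paper's repair is an averaging argument: among the $c'n^{2k}$ compatible copies of $P^k_{2k}$ through $N(v)$, if fewer than $\frac{c'}{2}n^{2k}$ had both ends with at least $\be_2 n^k$ mates, then the remaining copies would each have an end $\mf{e}$ with $M(\mf{e})<\be_2 n^k$, so they number at most $n^k\cdot\be_2 n^k$, giving a total below $c'n^{2k}$ for $\be_2<c'/2$ --- a contradiction. Alternatively, you could stay within your framework by proving the mate lower bound only for tuples whose vertices were selected under your pairwise-typicality restriction (so that for every pair $u_i,u_{i'}$ in the end, at most $\sqrt{\mu}n$ vertices $w$ have $\{wu_i,wu_{i'}\}\in F_w$), and checking that both ends of each absorber you build are of this form. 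As written, however, the assertion ``for any compatible $K_k$-tuple'' is not provable and the deduction that every constructed absorber is automatically a $\be_2$-absorber does not follow.
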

In \cite{levitt2010avoid}, a key step is to build a $k$-th power of a short path connecting every two fixed copies of $K_k$. We are attempting this approach which boils down to building compatible $k$-th power of a path for every two fixed compatible copies of $K_k$.
However, in our context, it is still unclear to us whether any compatible copy of $K_k$ has a mate. The worst-case scenario would be that an end of a compatible $k$-th power of a path can not be extended to longer ones and the bulk of
the work in our paper is to overcome this.
The second major work to cover almost all vertices using a constant number of compatible $k$-th power of paths in a robust manner that every end of the paths can still be `extended' further.
\begin{lemma}[Almost cover]\label{covering}
Let $k\in \N$. For any $\gamma, \tau > 0$, there exists
$\mu,\lambda,\beta > 0, $ such
that for every sufficiently
large $n$, if $(G, \mathcal{F})$ is an $(n,\frac{k}{k+1} + \gamma, \mu)$-incompatibility system, then
there exists a family of vertex-disjoint compatible $k$-th power of paths each of length at least $\lambda n$ covering all but at most $\tau n$
vertices of $G$, and every end of them has at least $\beta n^k$ mates.
\end{lemma}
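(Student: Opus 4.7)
The plan is to use Szemerédi's regularity lemma to reduce the problem to finding long compatible $k$-th powers of paths inside super-regular $(k+1)$-tuples, combined with a Hajnal--Szemerédi $K_{k+1}$-tiling of the reduced graph. First I would apply the regularity lemma with a parameter $\varepsilon$ chosen so that $\mu\ll\varepsilon\ll d\ll \gamma$, obtaining an $\varepsilon$-regular partition $V_0,V_1,\ldots,V_L$ with $|V_0|\le \varepsilon n$ and $|V_i|=m$ for $i\ge 1$. The reduced graph $R$ (with $ij\in E(R)$ whenever $(V_i,V_j)$ is $\varepsilon$-regular of density $\ge d$) inherits $\delta(R)\ge \bigl(\tfrac{k}{k+1}+\tfrac{\gamma}{2}\bigr)L$, so by the Hajnal--Szemerédi theorem $R$ admits a $K_{k+1}$-factor covering all but at most $k$ clusters. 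A standard super-regularization step (discarding $O(\sqrt{\varepsilon}\,m)$ vertices per cluster) turns every clique of the tiling into an $(\varepsilon',d/2)$-super-regular $(k+1)$-tuple of subclusters $W_1,\dots,W_{k+1}$, while the set of uncovered vertices (including $V_0$, the unmatched clusters and the discarded leftovers) has size at most $\tau n/2$.

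Inside each blob I would then embed a compatible $k$-th power of a path covering almost all of $W_1\cup\cdots\cup W_{k+1}$ by a greedy round-robin extension along the cyclic rotation of the clusters. At each step, super-regularity leaves at least $(d/2)^k m$ candidates in the next cluster that are jointly adjacent to the last $k$ embedded vertices, while the $\mu n$-boundedness of $\mathcal{F}$ forbids at most $k\mu n$ of them; since $\mu\ll d^k/L$, many compatible candidates remain and the embedding goes through. This yields a compatible copy of $P^k_\ell$ with $\ell=\Omega(n/L)=:\lambda n$ in every blob, with the last $o(m)$ vertices of each blob simply discarded into the $\tau n$ leftover to sidestep the usual endgame difficulties of greedy path-embeddings.

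The final task is to enforce $M(\mathbf{e})\ge\beta n^k$ at each endpoint. The minimum-degree hypothesis forces any $k$ vertices to have a common neighborhood of size at least $\bigl(\tfrac{1}{k+1}+k\gamma\bigr)n$, and iterating $k$ times shows that a \emph{typical} compatible $K_k$ admits $\Omega(n^k)$ compatible mates, since the $\mu n$-boundedness spoils only a $o(1)$-fraction of them. I would therefore reserve the first and last $k$ vertices of each path for vertices that are ``typical'' in their prescribed clusters, where typicality is a pair of codegree/compatibility conditions satisfied by all but a $o(1)$-fraction of candidates. The hard part, and the place where the compatibility layer really bites, is the joint handling of (i) the compatible greedy extension inside each blob and (ii) the global end-mate condition at the boundaries: one must plant the typical endpoint $k$-tuples first, propagate their positional constraints through the blow-up-style embedding, and check that the interior greedy extension never stalls. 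Both obligations are ultimately controlled by the $\mu n$-boundedness dominating only a negligible share of the relevant candidate sets, but the careful bookkeeping of this balance — and the coordination between the super-regular embedding and the global codegree-typicality at the ends — forms the technical core of the proof.
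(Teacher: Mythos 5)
Your skeleton (regularity lemma, Hajnal--Szemer\'edi tiling of the reduced graph, long compatible $k$-th powers of paths inside each $(k+1)$-tuple of clusters, leftovers discarded into the $\tau n$ budget) matches the paper's. But both of the steps you flag as delicate rest on a claim that is false, and it is precisely the point the paper identifies as the main difficulty. When you extend a partial $k$-th power of a path by a new vertex $v$ adjacent to the last $k$ vertices $w_1,\dots,w_k$, the compatibility conditions split into two kinds: conditions \emph{at the old vertices} $w_i$ (the pair $\{w_iv, w_iw_j\}$ must avoid $F_{w_i}$), which indeed exclude only $O(k^2\mu n)$ candidates $v$ by $\mu n$-boundedness; and conditions \emph{at the new vertex} $v$ (the pairs $\{vw_i, vw_j\}$ must avoid $F_v$), which are \emph{not} controlled by boundedness at all --- each candidate $v$ brings its own family $F_v$, and declaring $\{vw_1,vw_2\}\in F_v$ for \emph{every} $v$ costs each edge only one unit of its $\mu n$ budget. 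The space-barrier construction in the introduction does exactly this: every common neighbour of two adjacent vertices $u,w\in V_j$ has $\{vu,vw\}\in F_v$. So your assertions that ``the $\mu n$-boundedness forbids at most $k\mu n$ of them'' in the round-robin extension, and that a typical compatible $K_k$ has $\Omega(n^k)$ mates ``since the $\mu n$-boundedness spoils only a $o(1)$-fraction of them,'' are both wrong as stated; the greedy extension can stall at every candidate, and a compatible $K_k$ may have no mate whatsoever. (Separately, super-regularity alone does not give $(d/2)^km$ common neighbours for an \emph{arbitrary} last $k$-tuple, only for typical ones, but that is a standard repair.)

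The paper's fix, which is the actual content of the lemma and is absent from your proposal, is to never add a vertex ``bare'': one first counts $\Omega(n^{k+1})$ \emph{compatible} copies of $K_{k+1}$ in each blob via the compatible counting lemma (Lemma~\ref{hlwy}/Corollary~\ref{co1}), encodes them as a $(k+1)$-uniform hypergraph $H$, and cleans $H$ down to a nonempty $H'$ in which every $k$-set lies in either $0$ or at least $\frac{c_1}{2}n$ hyperedges. The path is then grown so that every consecutive $(k+1)$-set is a hyperedge of $H'$: membership in a compatible $K_{k+1}$ supplies the compatibility conditions at the new vertex for free, the link condition of $H'$ guarantees $\frac{c_1}{2}n - |P| - k^2\mu n>0$ admissible extensions (only the conditions at old vertices now need the boundedness), and the same link condition, iterated $k$ times, produces the $(\frac{c_1}{4}n-k^2\mu n)^k\ge\beta n^k$ mates at each end. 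Your ``codegree/compatibility typicality'' of endpoints would have to be exactly this recursive richness in compatible $K_{k+1}$'s, not a per-candidate rarity-of-bad-events statement, so the gap is a missing idea rather than a bookkeeping omission.
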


 The following result is used for connecting two ends (compatible copies of $K_r$) via a compatible $k$-th power of a path provided that both of them have polynomially many mates.
\begin{lemma}[Connecting ends]\label{connecting1}
For any $\beta, \gamma > 0$, there exist $\mu>0$ and $L=L(\ga)\in \N$
such that for every sufficiently large $n$. Let $(G, \mathcal{F})$ be an $(n,\frac{k}{k+1} + \gamma, \mu)$-incompatibility system, and $W\subseteq V(G)$ with $|W|<\min\{\frac{\gamma}{2}n,\frac{\beta}{2}n\}$. Then for every two disjoint $k$-tuples of vertices $\mf{e}_1,\mf{e}_2$ each with a family $\mathcal{M}_i$ of at least $\beta n^{k}$ mates ($i\in[2]$), there exists a compatible $k$-th power of a path $Q$ of length at most $L$, which has $\overf{e}_1,\overf{e}_2$ as ends and all other vertices in $G-W$.

\end{lemma}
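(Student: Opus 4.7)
The plan is to extend both $\mathbf{e}_1$ and $\mathbf{e}_2$ iteratively by appending compatible mates, maintaining at each step the invariant that the current forward end carries polynomially many mates, and then glue the two extensions by a short compatible connector.

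\textbf{Step 1: Mate-preservation lemma.} The main technical step is the claim that ``extension preserves the many-mates property'': for $\mu$ small, and for every compatible $K_k$ tuple $\mathbf{g}$ disjoint from a forbidden set $W'\subseteq V(G)$ of size $O(n)$, if $\mathbf{g}$ has at least $\alpha n^k$ mates disjoint from $W'$, then at least $\alpha' n^k$ mates $\mathbf{f}$ of $\mathbf{g}$ are themselves \emph{good}, in the sense that $\mathbf{f}$ has at least $\alpha' n^k$ mates disjoint from $W'\cup V(\mathbf{g})$, where $\alpha'=\alpha'(\alpha,\gamma,k)>0$. This is proved by double counting compatible copies of $P^k_{3k}$ extending $\mathbf{g}$. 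Greedily extend $\mathbf{g}$ one vertex at a time for $2k$ steps: at each new vertex the common neighborhood of the previous $k$ vertices has size at least $(\tfrac{1}{k+1}+k\gamma)n$ by inclusion--exclusion on the minimum-degree hypothesis, and excluding $W'$, the $O(k)$ previously used vertices, and the at most $O(k^2)\mu n$ incompatibility exclusions still leaves $\Omega(n)$ choices. Thus there are $\Omega_\gamma(n^{2k})$ such compatible $P^k_{3k}$'s, and since each mate $\mathbf{f}$ contributes at most $|M(\mathbf{f})|\le n^k$ extensions, averaging against the trivial upper bound forces the good mates of $\mathbf{g}$ to number at least $\alpha' n^k$.

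\textbf{Step 2: Iterated extension on both sides.} The hypothesis gives $|\mathcal{M}_i|\ge \beta n^k$, and since $|W|<\beta n/2$ a union bound leaves at least $\beta n^k/2$ mates of each $\mathbf{e}_i$ avoiding $W$. Starting from $\mathbf{e}_1$ and $\mathbf{e}_2$, apply Step 1 repeatedly, each time choosing a \emph{good} mate of the current forward end that avoids $W$ and the vertices already used on the other side. After $L_0=L_0(\gamma,\beta,k)$ iterations from each of $\overf{e}_1,\overf{e}_2$, we obtain compatible $k$-th power paths $Q_1,Q_2$ with forward ends $\mathbf{g}_1,\mathbf{g}_2$ each carrying $\Omega(n^k)$ mates avoiding all previously used vertices.

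\textbf{Step 3: Merge.} It remains to connect $\mathbf{g}_1$ to $\overf{g}_2$ by a constant-length compatible $k$-th power of a path. Apply Step 1 one more time on each side to obtain dense sets $\mathcal{L}\subseteq M(\mathbf{g}_1)$ of good mates of $\mathbf{g}_1$ and $\mathcal{R}=\{\mathbf{x}:\overf{x}\text{ is a good mate of }\mathbf{g}_2\}$, each of size $\Omega(n^k)$. A further double-counting --- analogous in spirit to Step 1 but applied now to compatible $P^k_{4k}$'s of the form $\mathbf{g}_1\mathbf{x}_1\mathbf{x}_2\overf{g}_2$ with $\mathbf{x}_1\in\mathcal{L}$ and $\mathbf{x}_2\in M(\mathbf{x}_1)\cap\mathcal{R}$ --- produces a valid pair $(\mathbf{x}_1,\mathbf{x}_2)$: the lower bound on the number of compatible $P^k_{4k}$'s starting at $\mathbf{g}_1$ through the good-mates chain dominates the combined bad contribution coming from $\mathbf{x}_1\notin\mathcal{L}$ or $\mathbf{x}_2\notin\mathcal{R}$. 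Concatenating $Q_1$, this two-step connector, and the reverse of $Q_2$ yields a compatible $k$-th power of a path of length at most $L=L(\gamma)$, with ends $\overf{e}_1,\overf{e}_2$ and all internal vertices in $V(G)\setminus W$.

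\textbf{Main obstacle.} The crux of the argument is the mate-preservation lemma of Step 1. The counting is standard in outline, but the $\Theta(k^2)$ incompatibility constraints accumulated during the greedy construction of $P^k_{3k}$ extensions must be tracked carefully at each vertex, and $\mu$ must be chosen small relative to $\gamma$ and $k$ so that the $\mu n$-boundedness never erodes the minimum-degree-based lower bound $(\tfrac{1}{k+1}+k\gamma)n$. The merge step of Step 3 is structurally similar and inherits the same dependencies.
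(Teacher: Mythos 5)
Your proposal has a genuine gap at its core, in the ``mate-preservation lemma'' of Step 1. You claim that when extending the current $k$-th power of a path by a new vertex $v$ adjacent to the last $k$ vertices $u_1,\ldots,u_k$, the incompatibility constraints exclude only $O(k^2)\mu n$ candidates. This accounts for incompatibilities \emph{at the old vertices} (for each $u_i$ and each edge $u_iu_j$, at most $\mu n$ vertices $v$ have $vu_i$ incompatible with $u_iu_j$ at $u_i$), but it does not account for incompatibilities \emph{at the new vertex} $v$: the edges $vu_1,\ldots,vu_k$ must be pairwise compatible at $v$. The $\mu n$-boundedness is a condition on each vertex's own family $F_v$; for a \emph{fixed} pair $u_i,u_j$, it places no bound whatsoever on the number of candidates $v$ for which $\{vu_i,vu_j\}\in F_v$ --- in the worst case this is every vertex of $N(u_i)\cap N(u_j)$, while the system remains $1$-bounded. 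The paper's space-barrier construction in the introduction realizes exactly this scenario (for an edge $uw$ inside a part, \emph{no} outside vertex $v$ has $vu,vw$ compatible at $v$), and the paper explicitly flags this as the main obstacle: it is unclear whether a compatible copy of $K_k$ can be extended at all. Consequently your lower bound of $\Omega_\gamma(n^{2k})$ compatible copies of $P^k_{3k}$ through a fixed $\mathbf{g}$ is unsubstantiated, and Steps 2--3, which iterate this lemma, inherit the gap. This is precisely why the paper does not extend greedily from fixed tuples: it instead (i) exploits the \emph{given} mates of $\mathbf{e}_1,\mathbf{e}_2$ to produce sets $U_i'$ of vertices that are a priori compatible with the fixed ends (property \ref{p41} follows from the existence of a mate with $i$-th coordinate $x$, not from boundedness), and (ii) counts compatible structures globally via the regularity lemma and Corollary~\ref{co1}, where all vertices of the cherry $u$--$v$--$w$ vary simultaneously so that incompatibilities at the middle vertex can be union-bounded by $O(\mu n^{3})\cdot n^{\ell-3}$.

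A second, independent problem is your merge in Step 3: you assume $\mathbf{g}_1$ and $\mathbf{g}_2$ can be joined by a connector of fixed length $4k$, i.e.\ that some mate of $\mathbf{g}_1$ has a mate among the reversed mates of $\mathbf{g}_2$. Two families of $\Omega(n^k)$ tuples need not admit such a direct two-step join, and nothing in your iterated extension steers the two sides toward each other. The paper resolves this by passing to the reduced graph and invoking the Koml\'os--S\'ark\"ozy--Szemer\'edi connecting lemma (Lemma~\ref{KSS98}), which is why the final length bound $L=L(\gamma)$ is a constant depending on $\gamma$ rather than $O(k)$.
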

As such, Lemma~\ref{connecting1} allows us to connect two compatible $k$-th power of paths into a longer one.
\begin{lemma}[Connecting paths]\label{connecting} For any $\beta, \gamma>0$, there exist $\mu>0$ and $L=L(\ga)\in \N$
such that the following holds
for sufficiently large
$n$. Let $(G, \mathcal{F})$ be an $(n,\frac{k}{k+1} + \gamma, \mu)$-incompatibility system, and $W\subseteq
V(G)$
with $|W|<\min\{\frac{\gamma}{4}n,\frac{\beta}{4}n\}$. Suppose $G$ has two vertices $u,v$ and vertex-disjoint absorbers $A_u,A_v$ on base paths $P_1$ and $P_2$, respectively, such that each $P_i$ has an end $\mf{e}_i$ with $M(\mf{e}_i)\ge \beta
n^{k}, i=1,2$. Write \[P_1=a_1a_2\ldots a_ku_1u_2\ldots u_k, ~\mf{e}_1=(u_1,\ldots,u_k),\]
\[P_2=b_1b_2\ldots b_kv_1v_2\ldots v_k,~\mf{e}_2=(v_1,\ldots,v_k).\]Then there exists
a compatible $k$-th power of a path $Q$ of length at most $L$ in $G-W$, such that

  \stepcounter{propcounter}
\begin{enumerate}[label = ({\bfseries \Alph{propcounter}})]
       \item\label{abs} the $k$-th power of the paths $a_1a_2\ldots a_k u_1u_2\ldots u_k\sim Q\sim v_k\ldots v_1b_k\ldots b_2b_1$ and \[a_1a_2\ldots a_k \sim u\sim u_1u_2\ldots u_k\sim Q\sim v_k\ldots v_1\sim v\sim b_k\ldots b_2b_1\] are all compatible in $(G, \mathcal{F})$.
\end{enumerate}

\end{lemma}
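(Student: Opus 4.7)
The plan is to reduce Lemma~\ref{connecting} directly to Lemma~\ref{connecting1} by pre-filtering the mate families of $\mf{e}_1$ and $\mf{e}_2$ so that, for whichever mates $Q$ uses to extend its two ends, the concatenations in~\ref{abs} --- both the non-absorbed path and the absorbed variants $a_1\ldots a_k\sim u\sim u_1\ldots u_k$ and $v_k\ldots v_1\sim v\sim b_k\ldots b_1$ --- are compatible at every junction vertex $u_j$ or $v_j$ ($j\in[k]$). All other compatibility is automatic: $P_1,P_2$ and their absorbed extensions are compatible by Definition~\ref{robust}, the path $Q$ will be compatible by Lemma~\ref{connecting1}, and each $P^k_{2k}$ internally traversed by $Q$ is compatible by Definition~\ref{mate}.

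Fix $j\in[k]$ and examine $u_j$. Comparing the $k$-th power structure of $P_1$ with that of $Q$, the edges at $u_j$ lying in $P_1$ but not in $Q$ are $\{u_j a_j, u_j a_{j+1},\ldots, u_j a_k\}$, while those in $Q$ but not in $P_1$ are $\{u_j v_1,\ldots,u_j v_j\}$, where $(v_1,\ldots,v_k)$ denotes the mate of $\mf e_1$ lying immediately after it along $Q$. Passing to the absorbed variant simply replaces the single edge $u_j a_j$ by $u_j u$, so to cover both variants simultaneously the cross-pairs that must be compatible at $u_j$ are
\[
\bigl\{\{u_j a_l, u_j v_i\}:\, j\le l\le k,\ 1\le i\le j\bigr\}\,\cup\,\bigl\{\{u_j u, u_j v_i\}:\, 1\le i\le j\bigr\}.
\]
Call a mate $(v_1,\ldots,v_k)$ of $\mf e_1$ \emph{good} if all such pairs (over every $j\in[k]$) are compatible under $\mathcal F$. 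For each of the $O(k^3)$ triples $(j,i,e)$ with $e\in\{u_j a_j,\ldots,u_j a_k, u_j u\}$, the $\mu n$-boundedness of $\mathcal F$ forces $v_i$ to avoid at most $\mu n$ specific neighbors of $u_j$, killing at most $\mu n\cdot n^{k-1}$ mates. Choosing $\mu$ small enough that $O(k^3\mu)\le \beta/2$, the good mates form a subfamily $\mathcal M_1'$ with $|\mathcal M_1'|\ge \tfrac{\beta}{2}n^k$; an identical argument on the $v$-side produces $\mathcal M_2'$.

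We now apply Lemma~\ref{connecting1} to $\mf{e}_1,\mf{e}_2$ with the restricted families $\mathcal M_1',\mathcal M_2'$ (with constant $\beta/2$ in place of $\beta$) and the forbidden set
\[
W':=W\cup(V(A_u)\setminus\{u_1,\ldots,u_k\})\cup(V(A_v)\setminus\{v_1,\ldots,v_k\})\cup\{u,v\}.
\]
Since $|W'|\le|W|+2k+2<\min\{\gamma/2,\beta/4\}n$ for $n$ large, its hypotheses hold, producing a compatible $k$-th power of a path $Q$ of length at most $L$ with ends $\overf{e}_1,\overf{e}_2$ and interior disjoint from $W'$, in particular from $u,v,V(A_u),V(A_v)$. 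Because Lemma~\ref{connecting1} draws the mates of $\mf e_1,\mf e_2$ appearing in $Q$ from the supplied families $\mathcal M_1',\mathcal M_2'$, both concatenations in~\ref{abs} are compatible at every junction, completing the proof. The main technical hurdle is the bookkeeping of paragraph two --- identifying the cross-pairs at each $u_j$ in both the absorbed and non-absorbed variants and showing that the union of constraints kills only an $O(k^3\mu)$-fraction of mates --- together with the input-sensitivity of Lemma~\ref{connecting1} that lets us prescribe the terminal mates of $Q$ via $\mathcal M_1',\mathcal M_2'$.
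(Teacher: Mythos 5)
Your proposal is correct and follows essentially the same route as the paper: filter each mate family down to the ``good'' mates for which all cross-pairs at the junction vertices $u_j$ (resp.\ $v_j$), in both the absorbed and non-absorbed variants, are compatible (losing only an $O(k^3\mu n^k)$ fraction by $\mu n$-boundedness), then invoke Lemma~\ref{connecting1} with the restricted families and an enlarged forbidden set. Your bookkeeping of which edges change between the two variants and your reliance on Lemma~\ref{connecting1} placing the terminal mates of $Q$ inside the supplied families match the paper's argument.
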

\begin{proof}
For any $\beta, \gamma > 0$, we choose \[\frac{1}{n}\ll\mu\ll\beta,\gamma,\frac{1}{k}~\text{and additionally}~\frac{1}{L}\ll \ga .\]
We fix
$(G, \mathcal{F})$ to be an $(n,\frac{k}{k+1} + \gamma, \mu)$-incompatibility system, and $W\subseteq
V(G)$
with $|W|<\min\{\frac{\beta}{4}n,\frac{\gamma}{4}n\}$, and $P_1,P_2,\mf{e}_1,\mf{e}_2$ given as above such that $M(\mf{e}_i)\ge \beta
n^{k}, i=1,2$.
We say a mate $\mf{f}=(f_1,\ldots,f_k)$ of $\mf{e}_1$ (similarly for $\mf{e}_2$) is \emph{good} for $P_1$ if
the $k$-th power of the paths $a_1a_2\ldots a_ku_1u_2\ldots u_k\sim f_1f_2\ldots f_k$ and $a_1a_2\ldots a_k\sim u\sim u_1u_2\ldots u_k\sim f_1f_2\ldots f_k$ are compatible, and otherwise it is \emph{bad}. 
Thus if $\mf{f}$ is bad for $P_1$, then there exist $x\in\{a_1,a_2,\ldots, a_k,u\}$, $u_i$ and $f_j$ for some $i,j\in[k]$ such that the edges $u_if_j$ and $xu_i$ are incompatible (at $u_i$). Therefore as $(G, \mathcal{F})$ is $\mu n$-bounded, the number of bad mates $\mf{f}$ for $P_1$ (resp. for $P_2$) is at most $(k^2\mu n)n^{k-1}$. For $i\in[2]$, we define the family
\[\mathcal{M}_i=\{\mf{f}: \mf{f} \textrm{ is a good mate of } \mf{e}_i\}.\]
Then by the choice of $\mu\ll \be,\ga,\frac{1}{k}$, it is easy to see that $|\mathcal{M}_i|\ge \frac{\be}{2} n^k$ for $i\in[2]$ and by applying Lemma~\ref{connecting1} to $G$ with $(\ga/2,\be/2, W\cup V(P_1)\cup V(P_2)\cup \{u,v\})$ in place of $(\ga,\beta, W)$, we can obtain disjoint good mates $\mf{f}_i\in\mathcal{M}_i$ for $i\in[2]$ and a compatible $k$-th power of a path $Q$ of length at most $L$ whose ends are $\overf{f}_1$ and $\overf{f}_2$. It is easy to check that the $k$-th power of the paths \[a_1a_2\ldots a_k u_1u_2\ldots u_k\sim Q\sim v_k\ldots v_1b_k\ldots b_2b_1,\]\[a_1a_2\ldots a_k \sim u\sim u_1u_2\ldots u_k\sim Q\sim v_k\ldots v_1\sim v\sim b_k\ldots b_2b_1\] are all compatible in $(G, \mathcal{F})$.
\end{proof}
To this end, it is worth a remark that Lemma~\ref{connecting} enables us to complete the absorption of the left-over vertices. Moreover, instead of $\be$-absorbers, given any two vertex-disjoint compatible copies of $P_{2k}^k$ each of which has an end $\mf{e}_i$ with $M(\mf{e}_i)\ge \beta
n^{k},i\in[2]$, we can connect them into a compatible $k$-th power of a longer path using almost the same argument. The other situations regarding the presence of $u$ or $v$ follow as well and we omit this in the statement of Lemma~\ref{connecting}.
\subsection{Regularity}
An important ingredient in our proofs is Szemer\'edi's Regularity Lemma, and we first give the crucial notion of $\eps$-regular pairs. 
\begin{definition}\label{regularpair} (Regular pair). Given a graph $G$ and disjoint vertex subsets $X, Y
\subseteq V
(G)$, the \emph{density}
of the pair $(X, Y)$ is defined as $d(X, Y ):= \frac{e(X, Y )}{ |X||Y |}$,
where $e(X, Y ) := e(G[X, Y])$. For $\varepsilon > 0$, the pair $(X, Y )$ is \emph{$\varepsilon$-regular}
if for every $A \subseteq X, B\subseteq
Y$ with $|A| \geq\varepsilon|X|, |B|\ge \varepsilon|Y |$, we have
$|d(A, B) -d(X, Y )| < \varepsilon$.
Additionally, if $d(X, Y ) \ge d$ for some $d > 0$, then we say that $(X, Y )$ is \emph{$(\varepsilon,
d)$-regular}.
\end{definition}
\begin{definition}\label{regularpartition} (Regular partition). For a graph $G = (V, E)$ and $\varepsilon,
d > 0$, a
partition $V = V_0\cup
V_1\cup\ldots\cup V_k$ is \emph{$(\varepsilon, d)$-regular}, if

$\bullet$ $|V_0|\le \varepsilon|V|$;

$\bullet$ $|V_1| = |V_2| =\ldots = |V_k|\le \lceil\varepsilon|V |\rceil$;

$\bullet$ all but at most $\varepsilon k^2$ pairs $(V_i, V_j )$ with $1 \le i < j \le k$ are
$(\varepsilon,d)$-regular.
\end{definition}

Moreover, we usually call $V_1,\ldots, V_k$ \emph{clusters} and call $V_0$ the
\emph{exceptional set}.

\begin{fact}\label{fact1} Let $(X, Y )$ be an $(\varepsilon, d)$-regular pair, and $B\subseteq Y$ with
$|B|\geq
\varepsilon|Y|$. Then all but at most
$\varepsilon|X|$ vertices in $X$ have degree at least $(d-\varepsilon)|B|$ in $B$.
\end{fact}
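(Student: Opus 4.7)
The plan is to argue by contradiction, which is the standard approach for such statements about $\varepsilon$-regular pairs. Let $A\subseteq X$ denote the set of "bad" vertices, namely $A=\{x\in X:|N(x)\cap B|<(d-\varepsilon)|B|\}$, and suppose for contradiction that $|A|\geq\varepsilon|X|$.

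The key observation is that the pair $(A,B)$ then satisfies the size threshold in the definition of $\varepsilon$-regularity, since by assumption $|A|\geq\varepsilon|X|$ and by hypothesis $|B|\geq\varepsilon|Y|$. Applying Definition \ref{regularpair} to $(A,B)$ yields $|d(A,B)-d(X,Y)|<\varepsilon$, and since $d(X,Y)\geq d$ by the $(\varepsilon,d)$-regularity assumption, this gives the lower bound $d(A,B)>d-\varepsilon$.

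On the other hand, the definition of $A$ gives the upper bound $e(A,B)<|A|\cdot(d-\varepsilon)|B|$, whence $d(A,B)<d-\varepsilon$, contradicting the previous inequality. Therefore $|A|<\varepsilon|X|$, which is even slightly stronger than the claimed bound of at most $\varepsilon|X|$.

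There is essentially no obstacle here; this is a one-step application of the definition of $\varepsilon$-regularity, and the only point requiring a small amount of care is to check that the inequalities $d(A,B)>d-\varepsilon$ and $d(A,B)<d-\varepsilon$ are both strict so that they truly contradict one another.
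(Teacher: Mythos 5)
Your proof is correct: taking $A$ to be the set of vertices of $X$ with fewer than $(d-\varepsilon)|B|$ neighbours in $B$ and assuming $|A|\ge\varepsilon|X|$, regularity gives $d(A,B)>d(X,Y)-\varepsilon\ge d-\varepsilon$ while the definition of $A$ gives $d(A,B)<d-\varepsilon$, a genuine contradiction. The paper states Fact~\ref{fact1} without proof, as it is a standard consequence of Definition~\ref{regularpair}, and your argument is exactly the standard one.
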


\begin{fact}\label{slicing} (Slicing lemma, \cite{KSS}). Let $(X, Y )$ be an $(\varepsilon, d)$-regular
pair. Then for
any $\varepsilon\le \eta\le1$ and $X'\subseteq X, Y'\subseteq Y$ with $|X'|\ge \eta|X|, |Y'|\ge \eta|Y|$, the pair $(X',Y')$ is an
$(\varepsilon', d')$-regular pair with
$\varepsilon' = \max\{\varepsilon/\eta, 2\varepsilon\}$ and $d'=d-
\varepsilon$.
\end{fact}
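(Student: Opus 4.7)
The plan is to verify directly the two clauses of $(\varepsilon',d')$-regularity for $(X',Y')$, using only that $(X,Y)$ is already $(\varepsilon,d)$-regular. The key observation is that the hypothesis $\eta\ge\varepsilon$ forces $X'$ and $Y'$ themselves, as well as all of their sufficiently large subsets, to remain large enough inside $X$ and $Y$ for the original regularity condition to apply. Thus the whole argument reduces to two applications of the $\varepsilon$-regularity of $(X,Y)$: one to the pair $(X',Y')$ itself, and one to an arbitrary large subpair.

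First I would lower-bound the density. Since $|X'|\ge\eta|X|\ge\varepsilon|X|$ and $|Y'|\ge\eta|Y|\ge\varepsilon|Y|$, the regularity of $(X,Y)$ applies to $(X',Y')$ and gives $|d(X',Y')-d(X,Y)|<\varepsilon$, hence $d(X',Y')>d-\varepsilon=d'$. Next, for any $A\subseteq X',\ B\subseteq Y'$ with $|A|\ge\varepsilon'|X'|$ and $|B|\ge\varepsilon'|Y'|$, the choice $\varepsilon'\ge\varepsilon/\eta$ yields $|A|\ge(\varepsilon/\eta)\cdot\eta|X|=\varepsilon|X|$ and similarly $|B|\ge\varepsilon|Y|$, so the regularity of $(X,Y)$ applies once more to produce $|d(A,B)-d(X,Y)|<\varepsilon$. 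Combining this with the density estimate by the triangle inequality gives $|d(A,B)-d(X',Y')|<2\varepsilon\le\varepsilon'$, which is exactly what is required.

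There is essentially no obstacle in the proof; the result is a clean bookkeeping consequence of the definition, and the only care needed is to keep the two roles of $\varepsilon'$ separate. The two thresholds in $\varepsilon'=\max\{\varepsilon/\eta,\,2\varepsilon\}$ correspond precisely to the two independent constraints above: the factor $\varepsilon/\eta$ is what lets us promote an $\varepsilon'$-large subset of $X'$ (respectively $Y'$) to an $\varepsilon$-large subset of $X$ (respectively $Y$), while the factor $2\varepsilon$ is what leaves enough slack for the triangle inequality when transferring the regularity estimate from $d(X,Y)$ to $d(X',Y')$.
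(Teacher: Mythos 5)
Your proof is correct and is the standard argument; the paper states this slicing lemma as a cited Fact from the Koml\'os--Simonovits survey without giving a proof, and your two-step verification (density via one application of regularity to $(X',Y')$, then the subpair condition via $|A|\ge\varepsilon'\eta|X|\ge\varepsilon|X|$ and the triangle inequality) is exactly the intended derivation.
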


\begin{lemma}\label{RL} (Degree form of the Regularity Lemma, \cite{KSS}). For every $\varepsilon > 0$,
there is an
$M= M(\varepsilon)$ such
that if $G = (V, E)$ is any graph and $d \in (0, 1]$ is any real number, then there is an $(\varepsilon,
d)$-regular
partition $V = V_0\cup
V_1\cup \ldots \cup V_k$ with $|V_i| = m$ for each $i\in[k]$, and a spanning subgraph $G'\subseteq G$ with
the
following properties:

$\bullet$ $1/\varepsilon \le k \le M$;

$\bullet$ $d_{G'}(v) > d_G(v)-(d + \varepsilon)|V |$ for all $v \in V$;

$\bullet$ $e(G'[V_i]) = 0$ for all $i\ge 1$;

$\bullet$ all pairs $(V_i, V_j) (1 \le i < j \le k)$ are $\varepsilon$-regular in $G'$ with density 0 or
at least $d$.
\end{lemma}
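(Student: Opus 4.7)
The plan is to derive the degree form from the classical Szemer\'edi Regularity Lemma by a routine sanitization argument. First I would apply the classical Regularity Lemma to $G$ with a much smaller auxiliary parameter $\varepsilon'\ll \min\{\varepsilon^2,d\varepsilon\}$, obtaining an $\varepsilon'$-regular partition $V=U_0\cup U_1\cup\ldots\cup U_\ell$ with $\lceil 1/\varepsilon'\rceil\le \ell\le M_0(\varepsilon')$, $|U_0|\le \varepsilon' n$, $|U_1|=\cdots=|U_\ell|=m$, and at most $\varepsilon'\ell^2$ of the pairs $(U_i,U_j)$ being $\varepsilon'$-irregular.

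Next I would clean the partition so that the irregularities do not concentrate on any single cluster. Call $U_i$ \emph{bad} if it participates in more than $\sqrt{\varepsilon'}\,\ell$ irregular pairs; a double-counting argument gives at most $2\sqrt{\varepsilon'}\,\ell$ bad clusters. Absorbing every bad cluster into the exceptional set yields a new exceptional set $V_0$ with $|V_0|\le \varepsilon' n+2\sqrt{\varepsilon'}\,n\le \varepsilon n$, and the good clusters are relabelled $V_1,\ldots,V_k$, where $k\ge 1/\varepsilon$ by the choice of $\varepsilon'$. I would then define $G'\subseteq G$ by removing three kinds of edges, all with both endpoints outside $V_0$: (i) edges inside any $V_i$; (ii) edges of $\varepsilon'$-irregular pairs $(V_i,V_j)$; and (iii) edges of regular pairs $(V_i,V_j)$ of density less than $d$. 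No edges incident to $V_0$ are removed.

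Finally I would verify the three stated properties. Every pair $(V_i,V_j)$ with $1\le i<j\le k$ is either empty in $G'$, or is inherited from $G$ as an $\varepsilon'$-regular pair of density at least $d$, which is in particular $\varepsilon$-regular with density $\ge d$. For the degree drop, fix $v\in V$: if $v\in V_0$ then $d_{G'}(v)=d_G(v)$ and the claim is trivial; otherwise $v$ lies in some good $V_i$ and the degree loss at $v$ is bounded by $m$ (edges inside $V_i$), plus $\sqrt{\varepsilon'}\,\ell\cdot m\le \sqrt{\varepsilon'}\,n$ (edges into the at most $\sqrt{\varepsilon'}\,\ell$ irregular partners of $V_i$), plus $d\,(n-|V_0|)$ (edges in low-density pairs), summing to at most $\varepsilon' n+\sqrt{\varepsilon'}\,n+dn\le (d+\varepsilon)n$.

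The hard part will be step two: the classical Regularity Lemma only bounds the \emph{total} number of irregular pairs, whereas I need a per-cluster bound so that the degree loss can be controlled uniformly at every vertex. The standard remedy of dumping the clusters with many irregular partners into $V_0$ is delicate, because shrinking $\varepsilon'$ to control the enlargement of $V_0$ must still be compatible with $k\ge 1/\varepsilon$ and $|V_0|\le \varepsilon n$, which ends up forcing $\varepsilon'$ to be roughly of order $\varepsilon^2$.
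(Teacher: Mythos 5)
The paper does not actually prove this lemma --- it is quoted verbatim from the Koml\'os--Simonovits survey \cite{KSS} --- so you are supplying the standard derivation from the classical Regularity Lemma. Your overall plan is the right one, and the step you single out as ``the hard part'' is in fact fine: if at most $\varepsilon'\ell^2$ pairs are irregular, a double count gives at most $2\sqrt{\varepsilon'}\,\ell$ clusters lying in more than $\sqrt{\varepsilon'}\,\ell$ irregular pairs, and with $\varepsilon'\ll\varepsilon^2$ both $|V_0|\le\varepsilon n$ and $k\ge 1/\varepsilon$ survive the relabelling.

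The genuine gap is in the third item of your degree accounting, the low-density pairs. The density of a pair $(V_i,V_j)$ controls the \emph{average} degree of vertices of $V_i$ into $V_j$, not the degree of any individual vertex, so ``$+\,d(n-|V_0|)$'' is not a valid per-vertex bound. Concretely, if $v\in V_i$ is joined to all of $V_j$ and there are no other $V_i$--$V_j$ edges, the pair has density $1/m<\varepsilon'$, is $\varepsilon'$-regular, and gets deleted, costing $v$ all $m$ of those edges; if this happens for every partner of $V_i$ (take $G$ a star whose centre is placed in $V_1$ by the partition), $v$ loses essentially its entire degree and $d_{G'}(v)>d_G(v)-(d+\varepsilon)n$ fails badly. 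The standard repair is a second round of cleaning: for each regular pair $(V_i,V_j)$, the definition of $\varepsilon'$-regularity (cf.\ Fact~\ref{fact1}) shows that at most $\varepsilon'm$ vertices of $V_i$ have more than $(d(V_i,V_j)+\varepsilon')m$ neighbours in $V_j$; summing over $j$, at most $\sqrt{\varepsilon'}\,m$ vertices of $V_i$ are atypical for more than $\sqrt{\varepsilon'}\,\ell$ partners. Move these vertices into $V_0$ as well, delete further vertices arbitrarily so that all clusters retain a common size, and use the Slicing Lemma (Fact~\ref{slicing}) to see that the surviving pairs stay regular with almost unchanged density (deleting pairs whose original density is below $d+\varepsilon'$ rather than $d$ preserves the ``density $0$ or at least $d$'' conclusion). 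After this extra step every surviving vertex loses at most $(d+\varepsilon')m$ per low-density partner except for at most $\sqrt{\varepsilon'}\,\ell$ exceptional partners, for a total loss of $(d+O(\sqrt{\varepsilon'}))n\le(d+\varepsilon)n$, while $|V_0|$ grows only by $O(\sqrt{\varepsilon'})n$, still below $\varepsilon n$. Without this device (or an equivalent one) the derivation is incomplete.
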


\begin{definition}\label{reducedgraph} (Reduced graph). Given an arbitrary graph $G = (V, E)$, a partition
$V =
V_1\cup\ldots\cup V_k$, and two
parameters $\varepsilon, d > 0$, the \emph{reduced graph} $R = R(\varepsilon, d)$ of $G$ is defined as
follows:

$\bullet$ $V (R) = [k]$,

$\bullet$ $ij \in E(R)$ if and only if $(V_i, V_j )$ is $(\varepsilon, d)$-regular.

\end{definition}

As remarked in \cite{KSS}, a typical application of degree form of the Regularity Lemma begins with a
graph $G = (V,
E)$ and appropriate
parameters $\varepsilon, d > 0$, and then obtains an $(\varepsilon, d)$-regular
partition $V = V_0\cup V_1\cup \ldots \cup V_k$  and a subgraph $G'$ with above-mentioned properties. Then
we usually drop the exceptional set $V_0$ to get a \emph{pure graph} $G'' = G'-V_0$ of $G$ and study the
properties of reduced graph $R = R(\varepsilon, d)$ of $G''$. By Lemma \ref{RL},

$$\delta(R)\ge\frac{\delta(G)-(d+\varepsilon)|V|-|V_0|}{m}\ge\frac{\delta(G)-(d+2\varepsilon)|V|}{m}.$$
In particular, if $\delta(G) \geq c|V |$, then $\delta(R)\ge(c-d-2\varepsilon)|R|$.

We also need a `compatible' variant of graph counting lemma as follows.
\begin{lemma}\label{hlwy} \cite{hu2022graph}
For constant $d,\eta>0$ and positive integers $r,h_1,\ldots,h_r$ with $\sum_{i=1}^rh_i=:h$, there exist positive constants $\varepsilon^\ast=\varepsilon^\ast(r,d,h), c=c(r,d,h)$ and $\mu=\mu(r,d,h,\eta)$ such that the following holds for sufficiently large $n$. Let $(G,\mathcal{F})$ be a $\mu n$-bounded incompatibility system with $|G|=n$ and $U_1,\ldots,U_r$ be pairwise vertex-disjoint sets in $V(G)$ with $|U_i|\ge \eta n$, $i \in [r]$ and every pair $(U_i,U_j)$ being $(\varepsilon^\ast,d)$-regular. Then there exist at least $c\prod_{i=1}^r|U_i|^{h_i}$ compatible copies of $K_r(h_1,\ldots,h_r)$ in $G[U_1,\ldots,U_r]$, each containing exactly $h_i$ vertices in $U_i$ for every $i\in[r]$.
\end{lemma}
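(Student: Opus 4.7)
The plan is a simple deletion argument on top of the classical counting lemma for $(\eps^\ast,d)$-regular $r$-partite setups. First, ignoring the incompatibility system, choose $\eps^\ast=\eps^\ast(r,d,h)$ small enough so that the standard counting lemma (proved inductively via Fact~\ref{fact1} and Fact~\ref{slicing}) yields at least $c_0\prod_{i=1}^r|U_i|^{h_i}$ copies of $K_r(h_1,\ldots,h_r)$ in $G[U_1,\ldots,U_r]$, for some $c_0=c_0(r,d,h)>0$. The same inductive argument shows that for any placement of three fixed vertices $v_1,v_2,v_3$ into prescribed template positions, the number of extensions to a full copy is at most $C_0\prod_{l=1}^r|U_l|^{h_l-s_l}$, where $s_l$ counts how many of $v_1,v_2,v_3$ lie in $U_l$, and $C_0=C_0(r,d,h)$. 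It remains to show that only a small fraction of these copies contains an incompatible pair of adjacent edges.

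I would bound the number of ``bad'' copies (those carrying at least one incompatible adjacent pair) by double-counting over incompatible triples, i.e.\ ordered triples $(v,u_1,u_2)$ with $\{vu_1,vu_2\}\in F_v$. Since $\mathcal{F}$ is $\mu n$-bounded, for each choice of $v$ and $u_1$ there are at most $\mu n$ choices of $u_2$, so there are at most $\mu n^3$ such triples. Each bad copy contains at least one incompatible pair whose ``template location'' is one of a constant number $C_1=C_1(r,h)$ of options (which template vertex plays the role of $v$ and which two play $u_1,u_2$). For a fixed template location and a fixed triple $(v,u_1,u_2)$, the extension bound above gives at most $C_0\prod_l|U_l|^{h_l-s_l}\le C_0(\eta n)^{-3}\prod_l|U_l|^{h_l}$ completions, using $|U_l|\ge\eta n$. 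Summing,
\[
\#\{\text{bad copies}\}\;\le\; C_0C_1\cdot \mu n^3\cdot (\eta n)^{-3}\cdot\prod_{l=1}^r|U_l|^{h_l}\;=\;\frac{C_0C_1\,\mu}{\eta^{3}}\prod_{l=1}^r|U_l|^{h_l}.
\]

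Finally, set $\mu=\mu(r,d,h,\eta)$ small enough that $C_0C_1\mu/\eta^{3}\le c_0/2$. Then the number of compatible copies is at least $(c_0/2)\prod_l|U_l|^{h_l}$, proving the lemma with $c:=c_0/2$ (a function of $r,d,h$ only, as required). I do not foresee a genuine obstacle here; the only care is in the extension count, which follows from the standard ``typical vertex'' argument: apply Fact~\ref{fact1} and Fact~\ref{slicing} iteratively along the template, trimming the candidate sets for the remaining $h-3$ vertices while retaining enough regularity to continue. One could alternatively avoid introducing $C_0$ explicitly by noting that a \emph{lower} bound on the total count plus an \emph{upper} bound on copies through any fixed triple (both standard consequences of the counting lemma) are all that the deletion step requires.
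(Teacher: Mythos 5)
Lemma \ref{hlwy} is quoted from \cite{hu2022graph} and the present paper contains no proof of it, so there is nothing internal to compare against; your argument is the standard deletion argument for such ``compatible counting'' statements and it is correct. The key steps all check out: the trivial bound $\prod_{l}|U_l|^{h_l-s_l}$ on extensions of three fixed vertices already suffices for the upper bound (no regularity is needed there), the number of incompatible ordered triples $(v,u_1,u_2)$ is at most $\mu n^3$ by $\mu n$-boundedness, and choosing $\mu$ small against $c_0\eta^3$ leaves at least $(c_0/2)\prod_l|U_l|^{h_l}$ compatible copies, with $\varepsilon^\ast$ and $c$ depending only on $r,d,h$ and $\mu$ additionally on $\eta$, exactly as the statement requires.
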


\begin{corollary}\label{co1} For any $d, \eta > 0$ and integer $h \ge 1$, and a graph $H$ with $V(H)=\{u_1,\ldots,u_h\}$, there exist positive constants $\varepsilon^*=\varepsilon^*(h,d), c = c(h,d)$ and $\mu=\mu(h, d,\eta)$ such that the following holds for sufficiently large $n$. Let $(G, \mathcal{F})$ be a $\mu n$-bounded incompatibility
system with $|G|=n$, and  $U_1,\ldots,U_h$ be pairwise vertex-disjoint sets in $V(G)$ with $|U_i| \ge \eta n$, $i \in [h]$, and $(U_i,U_j)$ are $(\varepsilon^*, d)$-regular if $u_iu_j\in H$, where $\{i,j\}\subseteq [h]$.
Then  $G$ contains $c\prod^h_{i=1}|U_i|$ compatible copies of $H$, and all the corresponding vertices of $u_i$ are in $U_i$, where $i\in[h]$.
\end{corollary}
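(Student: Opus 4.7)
The plan is to separate the combinatorial content of a copy of $H$ into the edge constraints of $H$, handled by the regularity of the pairs $(U_i,U_j)$ with $u_iu_j\in E(H)$, and the compatibility constraints, handled by the $\mu n$-boundedness of $\mathcal{F}$. I will first count all tuples $(v_1,\ldots,v_h)\in U_1\times\cdots\times U_h$ realizing the adjacency constraints of $H$ by a greedy embedding, and then subtract an upper bound on the number of such tuples that fail compatibility. Constants are chosen in the hierarchy $\varepsilon^{*}\ll d/h$ and $\mu\ll d^{h}\eta/h^{3}$, so that $\varepsilon^{*}$ and the final constant $c$ depend only on $h,d$ while $\mu$ depends additionally on $\eta$.

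\textbf{Counting adjacency-valid tuples.} Fix an arbitrary ordering $u_1,\ldots,u_h$ of $V(H)$ and for each $i$ write $N^{<}(u_i),N^{>}(u_i)$ for its backward/forward $H$-neighbors. Maintain candidate sets $T_i\subseteq U_i$, initialised to $U_i$ and shrunk by $T_i\leftarrow T_i\cap N_G(v_j)$ whenever a previously chosen $v_j$ satisfies $u_ju_i\in E(H)$. At step $i$, call $v_i\in T_i$ \emph{admissible} if for every $j>i$ with $u_iu_j\in E(H)$ one has $|N_G(v_i)\cap T_j|\ge (d-\varepsilon^{*})|T_j|$. The invariant $|T_j|\ge (d-\varepsilon^{*})^{h}|U_j|\ge (d/2)^{h}|U_j|$ guarantees $|T_j|\gg\varepsilon^{*}|U_j|$, so Fact~\ref{fact1} applied to the $(\varepsilon^{*},d)$-regular pair $(U_i,U_j)$ with $B=T_j$ excludes at most $\varepsilon^{*}|U_i|$ vertices of $U_i$ for each $j$; a union bound over $j\in N^{>}(u_i)$ leaves at least $c'|U_i|$ admissible choices, for some $c'=c'(h,d)>0$. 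Multiplying over $i=1,\ldots,h$ yields at least $c'\prod_i|U_i|$ tuples meeting every adjacency constraint of $H$.

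\textbf{Bounding compatibility-bad tuples.} Call $(v_1,\ldots,v_h)$ \emph{bad} if there exist $i,j,j'$ with $u_j,u_{j'}\in N_H(u_i)$, $j\ne j'$, and $\{v_iv_j,v_iv_{j'}\}\in F_{v_i}$. For any fixed ordered triple $(i,j,j')$, the $\mu n$-boundedness gives at most $\mu n$ vertices $v_{j'}\in V(G)$ making the pair incompatible at $v_i$ for each choice of $v_i\in U_i$ and $v_j\in U_j$, hence at most $|U_i||U_j|\mu n$ bad sub-triples $(v_i,v_j,v_{j'})\in U_i\times U_j\times U_{j'}$. Multiplying by the free choices for the remaining coordinates bounds the bad tuples for this triple by $\mu n\prod_k|U_k|/|U_{j'}|\le(\mu/\eta)\prod_k|U_k|$. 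A union bound over the at most $h^3$ such triples gives at most $(h^3\mu/\eta)\prod_k|U_k|$ bad tuples in total.

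Subtracting, at least $(c'-h^3\mu/\eta)\prod_k|U_k|\ge(c'/2)\prod_k|U_k|$ tuples are both adjacency-valid \emph{and} compatibility-good, giving the desired count with $c:=c'/2$. The main obstacle is the compatibility step: the $\mu n$-boundedness directly bounds the number of bad \emph{neighbors} per edge at a single vertex, not the number of bad \emph{tuples}, so I must spend $v_{j'}$ as a "free" coordinate in a double count, which is precisely what forces $\mu$ to shrink with $\eta$ while $\varepsilon^{*}$ and $c$ need not.
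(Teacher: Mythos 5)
Your proposal is correct, and it is the standard count-and-subtract argument: embed greedily using Fact~\ref{fact1} to count tuples realizing the adjacency pattern of $H$, then use the $\mu n$-boundedness to show that for each of the $O(h^3)$ cherries $v_jv_iv_{j'}$ at most $\mu n\prod_k|U_k|/|U_{j'}|\le(\mu/\eta)\prod_k|U_k|$ tuples are spoiled, which is negligible once $\mu\ll c'(h,d)\,\eta$. The paper itself gives no proof of Corollary~\ref{co1}; it presents it as a consequence of the compatible counting lemma (Lemma~\ref{hlwy}) imported from \cite{hu2022graph}, whose proof is the same two-step scheme for complete multipartite patterns. Your self-contained version is arguably cleaner for the statement as written, since Lemma~\ref{hlwy} assumes \emph{all} pairs $(U_i,U_j)$ are regular while the corollary only assumes regularity on the pairs corresponding to edges of $H$, so a literal black-box reduction is not immediate, whereas your greedy embedding only ever invokes regularity on $H$-edges. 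Two cosmetic points: the hierarchy should read $\varepsilon^*\ll (d/2)^h$ rather than $\varepsilon^*\ll d/h$ for the invariant $|T_j|\ge(d/2)^h|U_j|\ge\varepsilon^*|U_j|$ to be usable in Fact~\ref{fact1} (harmless, since $\varepsilon^*$ may depend on $h$ and $d$), and the product over steps gives $(c')^h\prod_i|U_i|$ rather than $c'\prod_i|U_i|$, which only renames the constant.
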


To this end, we give two well-known concentration inequalities for random variables.
\begin{lemma}\label{chernoff}(Chernoff's inequality, \cite{janson}, Corollary 2.3). Let $X\sim Bin(n,p)$, Then for every $0<a<3/2$, we have
$$\mathbb{P}(|X-\mathbb{E}X|>a\mathbb{E}X)< 2e^{-a^2\mathbb{E}X/3}.$$
\end{lemma}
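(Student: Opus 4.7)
The plan is to apply the classical Cram\'er--Chernoff exponential-moment method. Write $X = \sum_{i=1}^n X_i$ where each $X_i$ is Bernoulli$(p)$, and set $\mu := \mathbb{E}X$. The first step is, for any $t > 0$, to apply Markov's inequality to the non-negative random variable $e^{tX}$, yielding
\[
\mathbb{P}(X \ge (1+a)\mu) \le e^{-t(1+a)\mu}\,\mathbb{E}[e^{tX}],
\]
and then to use independence together with $1 + x \le e^x$ to bound $\mathbb{E}[e^{tX}] = (1-p+pe^t)^n \le e^{\mu(e^t - 1)}$.

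Next, I would optimize the resulting exponent over $t > 0$ by choosing $t = \log(1+a)$, producing the sharp one-sided bound
\[
\mathbb{P}(X \ge (1+a)\mu) \le \exp\!\big(-\mu\big[(1+a)\log(1+a) - a\big]\big).
\]
The main analytic task is then to replace the awkward expression in brackets by the simpler $a^2/3$, that is, to verify $(1+a)\log(1+a) - a \ge a^2/3$ throughout $0 < a < 3/2$. My approach would be to set $\varphi(a) := (1+a)\log(1+a) - a - a^2/3$, note $\varphi(0) = \varphi'(0) = 0$, and show $\varphi \ge 0$ on the target interval; the subtle point is that $\varphi''(a) = \tfrac{1}{1+a} - \tfrac{2}{3}$ is non-negative only on $[0, 1/2]$, so on $(1/2, 3/2)$ a separate monotonicity/endpoint check is required.

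For the lower tail $\mathbb{P}(X \le (1-a)\mu)$, relevant only when $a < 1$ since $X \ge 0$, I would run the same argument with $-t$ in place of $t$, obtaining the even stronger bound $\exp(-a^2\mu/2)$. Combining the two one-sided tails via a union bound produces the factor $2$ in the statement. The only genuinely nontrivial step is the calculus inequality on $(1/2, 3/2)$; the rest of the argument is an essentially mechanical application of Markov's inequality, independence of the summands, and $1 + x \le e^x$.
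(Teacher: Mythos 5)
Your proposal is correct. The paper does not prove this lemma at all --- it is quoted verbatim from Janson, \L uczak and Ruci\'nski (Corollary 2.3 there) and used as a black box --- so there is no internal proof to compare against; your Cram\'er--Chernoff argument is the standard derivation of exactly this form of the bound. The one step you flag as nontrivial does go through: with $\varphi(a)=(1+a)\log(1+a)-a-\tfrac{a^2}{3}$ one has $\varphi(0)=\varphi'(0)=0$, $\varphi'$ is positive on $(0,1/2]$ and changes sign once thereafter, and $\varphi(3/2)=\tfrac52\log\tfrac52-\tfrac94>0$, so $\varphi\ge 0$ on all of $[0,3/2]$ (equivalently, this is the standard bound $(1+a)\log(1+a)-a\ge \tfrac{a^2}{2(1+a/3)}\ge\tfrac{a^2}{3}$ for $a\le 3/2$). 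Combining the upper tail $e^{-a^2\mu/3}$ with the strictly smaller lower tail $e^{-a^2\mu/2}$ indeed yields the strict inequality with the factor $2$ as stated.
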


\begin{lemma}\label{Janson}(Janson's inequality, \cite{janson}, Theorem 2.14). Let $p\in[0,1]$, $G$ be a graph and $R$ be a random vertex subset obtained by including every vertex of $G$ independently with probability $p$. Let $\mathcal{F}\subseteq 2^{V(G)}$ be a collection of vertex subsets of $G$. Given a vertex subset $F$ in $\mathcal{F}$, we denote by $I_F$ the indicator random variable which is 1 if $F$ is contained in $R$ and 0 otherwise.
Let $X=\sum_{F\in\mathcal{F}}I_F$, let $\lambda=\mathbb{E}[X]$ and let $$\overline{\Delta}=\sum_{(F,F')\in \mathcal{F}^2:F\cap F'\neq\emptyset}\mathbb{E}[I_FI_{F'}].$$ Then, for every $\varepsilon\in(0,1)$, we have $$\mathbb{P}[X\leq(1-\varepsilon)\lambda]\leq \exp\big(-\frac{\varepsilon^2\lambda^2}{2\overline{\Delta}}\big).$$

\end{lemma}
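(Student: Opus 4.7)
The plan is to establish the lower tail bound via the standard exponential moment (Laplace transform) method. Write $X = \sum_{F \in \mathcal{F}} I_F$ with $\mathbb{E}[X] = \lambda$. For any parameter $t \geq 0$, Markov's inequality applied to the non-negative random variable $e^{-tX}$ gives
\[
\mathbb{P}\bigl[X \leq (1-\varepsilon)\lambda\bigr] \;=\; \mathbb{P}\bigl[e^{-tX} \geq e^{-t(1-\varepsilon)\lambda}\bigr] \;\leq\; e^{t(1-\varepsilon)\lambda}\,\mathbb{E}[e^{-tX}],
\]
so the entire task reduces to controlling the negative moment generating function in terms of $\lambda$ and $\overline{\Delta}$.

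The crux is to establish
\[
\mathbb{E}[e^{-tX}] \;\leq\; \exp\!\Bigl(-t\lambda + \tfrac{1}{2} t^{2} \overline{\Delta}\Bigr), \qquad t \in [0,1].
\]
To prove this I would factor $e^{-tX} = \prod_{F \in \mathcal{F}} e^{-tI_F}$, rewrite each factor as $1 - (1-e^{-t}) I_F$, and then either (i) apply the Harris/FKG correlation inequality, noting that each $e^{-tI_F}$ is a decreasing function of the random set $R$ on the product space $\{0,1\}^{V(G)}$, or (ii) process the family $\mathcal{F}$ in an arbitrary linear order and bound, step by step, the conditional expectation of $I_F$ given the indicators already exposed. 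Either way, the first-order contribution sums to $-t\lambda$ (using $1 - e^{-t} \geq t - t^{2}/2$), while the correction due to overlapping sets $F' \cap F \neq \emptyset$ aggregates to precisely $\tfrac{1}{2} t^{2} \overline{\Delta}$; disjoint pairs contribute nothing because the corresponding indicators are independent under the product measure. The inequality $\ln(1-x) \leq -x - x^{2}/2$ for small $x$ is the convenient way to assemble these contributions into one clean exponential.

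Combining the Markov step with the mgf estimate yields
\[
\mathbb{P}\bigl[X \leq (1-\varepsilon)\lambda\bigr] \;\leq\; \exp\!\Bigl(-t\,\varepsilon\lambda + \tfrac{1}{2} t^{2} \overline{\Delta}\Bigr),
\]
and optimizing the right-hand side over $t \in [0,1]$ at the stationary point $t^{\ast} = \varepsilon\lambda/\overline{\Delta}$ (which indeed lies in $[0,1]$ in the only informative regime $\varepsilon\lambda \leq \overline{\Delta}$, since otherwise the second-moment method already forces $X$ close to $\lambda$) produces the desired bound $\exp\bigl(-\varepsilon^{2}\lambda^{2}/(2\overline{\Delta})\bigr)$. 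The main obstacle is the key mgf inequality: isolating the quadratic correction $\tfrac{1}{2} t^{2} \overline{\Delta}$ without picking up higher-order dependency terms requires a careful use of FKG-type positive association of decreasing events (or, equivalently, Suen-style iterative conditioning), and this is exactly where the fact that $\overline{\Delta}$ sums only over \emph{intersecting} pairs $(F,F')$ -- not over all pairs -- becomes essential.
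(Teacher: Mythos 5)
The paper offers no proof of this lemma: it is quoted as Theorem 2.14 of the cited reference \cite{janson}, so there is nothing internal to compare your argument against. On its own merits, your sketch is the standard proof of the lower-tail Janson inequality: Markov's inequality applied to $e^{-tX}$, the moment-generating-function estimate $\mathbb{E}[e^{-tX}]\le\exp(-t\lambda+\tfrac12 t^2\overline{\Delta})$, and optimization at $t^{*}=\varepsilon\lambda/\overline{\Delta}$ (which is always an admissible choice here, since the diagonal pairs $F=F'$ are included in $\overline{\Delta}$, so $\overline{\Delta}\ge\lambda\ge\varepsilon\lambda$; your aside about the ``informative regime'' is unnecessary). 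One substantive caution about your route (i): factoring $e^{-tX}=\prod_{F}e^{-tI_F}$ and applying Harris/FKG to the decreasing factors yields $\mathbb{E}\bigl[\prod_F e^{-tI_F}\bigr]\ge\prod_F\mathbb{E}[e^{-tI_F}]$, which is a \emph{lower} bound on the mgf and hence points in the wrong direction for your Markov step. The correct deployment of FKG is inside a differential (equivalently, telescoping) argument: for each $F$ split $X=Y_F+Z_F$, where $Y_F$ sums the indicators of sets meeting $F$ and $Z_F$ those of sets disjoint from $F$; conditionally on $I_F=1$ the functions $e^{-tY_F}$ and $e^{-tZ_F}$ are decreasing in the remaining independent vertex indicators, so positive association together with $\mathbb{E}[e^{-tZ_F}]\ge\mathbb{E}[e^{-tX}]$ and $e^{-tY_F}\ge 1-tY_F$ gives $\mathbb{E}[I_Fe^{-tX}]\ge\mathbb{E}[I_F]\bigl(1-t\,\mathbb{E}[Y_F\mid I_F=1]\bigr)\mathbb{E}[e^{-tX}]$. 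Summing over $F$ and using $\sum_F\mathbb{E}[I_FY_F]=\overline{\Delta}$ yields $-\tfrac{d}{dt}\ln\mathbb{E}[e^{-tX}]\ge\lambda-t\overline{\Delta}$, and integrating over $[0,t]$ produces exactly the mgf bound you need. Your route (ii), the Suen-style iterative conditioning, is essentially this computation, so the defect is one of precision in how FKG enters rather than a flaw in the overall architecture.
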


\subsection{Put things together}\label{Pf of main}

\begin{proof}[Proof of Theorem~\ref{main theorem}]
For any $\gamma > 0$, we choose $$\frac{1}{n}\ll\mu\ll\lambda,\beta_3\ll\tau\ll p\ll\frac{1}{L},\beta_1,\beta_2\ll \gamma,\frac{1}{k},$$ and let $(G, \mathcal{F})$ be an
$(n, \frac{k}{k+1} + \gamma, \mu)$-incompatibility system.
By Lemma \ref{absorbing},  there exist
$\beta_1,
\beta_2> 0$ such that every vertex of $G$ has $\beta_1n$
vertex-disjoint
$\beta_2$-absorbers. 
Let $R$ be a random set of vertices obtained by including every vertex of $G$ independently with
probability $p$.
\begin{claim}[Reservoir]\label{R} The following properties hold with high probability.
  \stepcounter{propcounter}
\begin{enumerate}[label = ({\bfseries \Alph{propcounter}\arabic{enumi}})]
  \item\label{e1} $\frac{1}{2}pn\le |R|\le\frac{3}{2}pn$;
  \item\label{e2} $d_R(v)\ge(\frac{k}{k+1}+\frac{\gamma}{2})|R|$ for each $v\in V(G)$;
  \item\label{e3} Every vertex $v\in V(G)$ has at least $\frac{p^{2k}}{2}|\mathcal{A}(v)|$ vertex-disjoint $\beta_2$-absorbers in $R$, where $\mathcal{A}(v)$ is a maximum set of vertex-disjoint $\beta_2$-absorbers of $v$ in $(G,\mathcal{F})$;
  \item\label{e4} Given any fixed constant $\be>0$, every $k$-tuple $\mf{e}$ with $M(\mf{e})\ge\beta n^k$ has at least $\frac{p^k}{2}M(\mf{e})$ mates in $R$, where $M(\mf{e})$ is the number of the mates of $\mf{e}$ in $(G,\mathcal{F})$.
\end{enumerate}

\end{claim}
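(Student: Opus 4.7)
The plan is to verify each of \ref{e1}--\ref{e4} by a concentration inequality for a sum of indicators determined by the random vertex set $R$, followed by a union bound over the relevant vertices or tuples. Three of the four items reduce immediately to Chernoff, since the underlying indicator variables are independent; only \ref{e4} requires Janson's inequality because distinct mates of a $k$-tuple can share vertices. Since $p$ is a positive constant (by the hierarchy $\tau\ll p\ll 1/L$), every failure probability of the form $\exp(-\Omega(pn))$ beats any polynomial factor in $n$, which is what the union bounds need.

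For \ref{e1}, $|R|\sim\mathrm{Bin}(n,p)$ with $\mathbb{E}|R|=pn$, so Lemma~\ref{chernoff} applied with $a=\tfrac12$ gives $|R|\in[\tfrac12 pn,\tfrac32 pn]$ with failure probability $\exp(-\Omega(pn))$. For \ref{e2}, fix $v\in V(G)$: then $d_R(v)\sim\mathrm{Bin}(d_G(v),p)$ with $\mathbb{E}[d_R(v)]\ge(\tfrac{k}{k+1}+\gamma)pn$. Choose $a=a(\gamma,k)>0$ small enough that $(1-a)(\tfrac{k}{k+1}+\gamma)\ge(1+a)(\tfrac{k}{k+1}+\tfrac{\gamma}{2})$, apply Chernoff at deviation $a$ to both $d_R(v)$ and $|R|$, and combine the resulting bounds to deduce $d_R(v)\ge(\tfrac{k}{k+1}+\tfrac{\gamma}{2})|R|$; a union bound over the $n$ choices of $v$ is absorbed by the exponentially small failure probability. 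For \ref{e3}, fix $v$ and write $\mathcal{A}(v)=\{A_1,\ldots,A_t\}$ with $t\ge\beta_1 n$ by Lemma~\ref{absorbing}. Since the $A_i$ are pairwise vertex-disjoint with $|V(A_i)|=2k$, the indicators $\mathbf{1}[V(A_i)\subseteq R]$ are i.i.d. Bernoulli$(p^{2k})$, so Chernoff applied to their sum at deviation $\tfrac12$ produces at least $\tfrac12 p^{2k}t$ survivors with failure probability $\exp(-\Omega(p^{2k}\beta_1 n))$, and we union bound over $v$.

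For \ref{e4}, fix a $k$-tuple $\mf{e}$ with $M(\mf{e})\ge\beta n^k$, let $\mathcal{M}$ be its family of mates, and set $X=\sum_{\mf{f}\in\mathcal{M}}\mathbf{1}[V(\mf{f})\subseteq R]$ so that $\lambda:=\mathbb{E}[X]=p^k M(\mf{e})$. Two mates $\mf{f},\mf{f}'$ sharing exactly $j\ge 1$ vertices occur jointly with probability $p^{2k-j}$, and the number of such ordered pairs is at most $M(\mf{e})\binom{k}{j}n^{k-j}$ (fix $\mf{f}$, choose the $j$ shared positions within $\mf{f}$, then choose the remaining $k-j$ vertices of $\mf{f}'$ arbitrarily). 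Hence
\[
\overline{\Delta}\ \le\ M(\mf{e})\sum_{j=1}^{k}\binom{k}{j}n^{k-j}p^{2k-j}\ \le\ C_k\,M(\mf{e})\,n^{k-1}p^{2k-1},
\]
for some constant $C_k=C_k(k)$, where in the last step the $j=1$ term dominates because $np\gg 1$. Therefore $\lambda^2/(2\overline{\Delta})\ge \beta pn/(2C_k)=\Omega(n)$, and Lemma~\ref{Janson} gives $\mathbb{P}[X\le\lambda/2]\le\exp(-\Omega(n))$, which easily survives the union bound over the at most $n^k$ eligible tuples $\mf{e}$.

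The main obstacle is precisely \ref{e4}: unlike in \ref{e3}, mates of $\mf{e}$ can overlap so independence is unavailable, and one must control the Janson correlation term $\overline{\Delta}$ by summing the contributions according to intersection size, using the trivial bound $n^{k-j}$ on the number of extensions at overlap $j$ together with $np\gg 1$ to identify the dominant term.
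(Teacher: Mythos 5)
Your proof is correct and follows essentially the same route as the paper: Chernoff plus a union bound for \ref{e1}--\ref{e3} (using vertex-disjointness for independence in \ref{e3}), and Janson's inequality for \ref{e4} with $\overline{\Delta}$ controlled by summing over intersection sizes, where the overlap-one term dominates since $np=\Omega(n)$. The only cosmetic difference is that you apply Janson to the full family of mates (which directly yields the stated bound $\tfrac{p^k}{2}M(\mf{e})$), whereas the paper truncates to a subfamily of exactly $\beta n^k$ mates; both computations give a failure probability of $\exp(-\Omega(n))$, which survives the union bound over the at most $n^k$ tuples.
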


\begin{proof}

  (1) Note that $\mathbb{E}[|R|]=pn$. By Lemma \ref{chernoff}, we have $\mathbb{P}(||R|-pn|>\frac{pn}{2})<\exp(-\frac{1}{12}\mathbb{E}[|R|])=\exp(-pn/12)$.


(2) Choose $\delta\ll\gamma$ (need $\delta<\frac{\gamma(k+1)}{4k+3\gamma k+3\gamma}$). We have $\mathbb{P}(|R|>(1+\delta)pn)<\exp(-\delta^2 pn/3)$ by Lemma \ref{chernoff}. On the other hand, for a given vertex $v\in V(G)$, $\mathbb{E}[d_R(v)]\ge(\frac{k}{k+1}+\gamma)pn$. So
$\mathbb{P}\big(d_R(v)<(\frac{k}{k+1}+\gamma)pn(1-\delta)\big)<\exp\big(-\delta^2(\frac{k}{k+1}+\gamma)pn/3\big)$ by Lemma \ref{chernoff}. Hence
\begin{eqnarray*}
  \mathbb{P}\big(d_R(v)<(\frac{k}{k+1}+\frac{\gamma}{2})|R|\big) &<&  \mathbb{P}\big(d_R(v)<(\frac{k}{k+1}+\gamma)\frac{1-\delta}{1+\delta}|R|\big)\\ &\le&\mathbb{P}\big(d_R(v)<(\frac{k}{k+1}+\gamma)pn(1-\delta)\big) +\mathbb{P}(|R|>(1+\delta)pn)\\
   &<& \exp\big(-\delta^2(\frac{k}{k+1}+\gamma)pn/3\big)+\exp(-\delta^2 pn/3).
\end{eqnarray*}

(3) For every vertex $v\in V(G)$, let $X_v$ be the number of vertex-disjoint $\beta_2$-robust absorbers from $\mathcal{A}(v)$ that lie in $R$. Then $\mathbb{E}[X_v]= p^{2k}|\mathcal{A}(v)|$. Note that $|\mathcal{A}(v)|\ge\beta_1 n$.
Hence $\mathbb{P}(X_v<\frac{p^{2k}}{2}|\mathcal{A}(v)|)=\mathbb{P}(X_v<\mathbb{E}[X_v]/2)<\exp(-\frac{\mathbb{E}[X_v]}{12})
<\exp(-\frac{p^{2k}}{12}|\mathcal{A}(v)|)<\exp(-p^{2k}\beta_1n/12)$ by Lemma \ref{chernoff}.

(4) For a given $k$-tuple $\mf{e}$ with $M(\mf{e})\ge \beta n^k$, let $\mathcal{F}$ be the set of mates for $\mf{e}$. Then $|\mathcal{F}|=M(\mf{e})\ge\be n^k$. We choose a subfamily $\mathcal{F}'$ of $\mathcal{F}$ with exactly $\be n^k$ members. For each $F\in \mathcal{F}'$, let $I_F$ be the indicator random variable with
\begin{displaymath}
I_F = \left\{ \begin{array}{ll}
1 & \textrm{if $F$ is contained in $R$,}\\
0 & \textrm{otherwise,}
\end{array} \right.
\end{displaymath}
and let $X=\sum_{F\in\mathcal{F}'}I_F$. We have $\mathbb{E}[X]= \be n^kp^k$, and
\begin{eqnarray*}
  \overline{\Delta}-\mathbb{E}[X] &\le& \sum^{k-1}_{s=1}{k\choose s}|\mathcal{F}'|\cdot n^{k-s}\cdot p^{2k-s} \\
   &=& \sum^{k-1}_{s=1}{k\choose s}\be (np)^{2k-s} \\
   &\le& k^k\be (np)^{2k-1}.
\end{eqnarray*}
By Lemma \ref{Janson} we have that $\mathbb{P}[X<\frac{p^k}{2}M(\mf{e})]=\mathbb{P}[X<\mathbb{E}[X]/2]\le\exp\Big(-\frac{(\mathbb{E}[X])^2}{8\overline{\Delta}}\Big)\le\exp\Big(-\frac{\be np}{8k^k}\Big)$.

By the union bound, we obtain that with high probability, \ref{e1}--\ref{e4} hold.
\end{proof}

\noindent\textbf{Connecting $\be_2$-absorbers.}
We choose the reservoir $R$ as above. Note that by Lemma \ref{absorbing} each vertex has at least $\be_1n$ vertex-disjoint $\be_2$-absorbers. Then by the choice of $p\ll\beta_1,\beta_2$ and Claim~\ref{R}~\ref{e1}, we can greedily pick a collection $\mathcal{C}=\{A_v:v\in R\}$ of vertex-disjoint $\be_2$-absorbers $A_v$ (for $v$) from $V(G)\setminus R$. Moreover, $\de(G-R)\ge (\frac{k}{k+1} + \frac{\gamma}{2})n$ and every end of $A_v$ has at least $\beta_2n^k-|R|n^{k-1}\ge \frac{\beta_2}{2}n^k$ mates in $G-R$. Let $\mathcal{S}$ be an arbitrary sequence of all these copies $A_v,v\in R$. By repeatedly applying Lemma \ref{connecting}, we can iteratively connect two consecutive copies from $\mathcal{S}$ into a compatible $k$-th power of a path, say $P_R$, via a collection of $|R|-1$ vertex-disjoint compatible $k$-th power of paths of length at most $L$ in $G-R$. Indeed, during the process, suppose we have two consecutive absorbers $A_u,A_v$ to be connected and let $\mathcal{Q}$ be the family of vertex-disjoint $k$-th power of paths used for previous connections along the sequence. Since $p\ll \frac{1}{L},\be_2\ll\ga$ and thus $|R|+|V(\mathcal{Q})|\le (L+1)\cdot\frac{3}{2}pn\le \min\{\frac{\gamma}{8}n,\frac{\be_2}{8}n\}$, we can apply Lemma~\ref{connecting} to $G-R$ with $(\frac{\ga}{2},\frac{\be_2}{2}, R\cup V(\mathcal{Q}))$ in place of $(\ga,\be, W)$ and obtain a compatible $k$-th power of path $Q_{uv}$ satisfying \ref{abs}. 

\noindent\textbf{Almost cover.}
 Let $G'=G[V(G)-R-V(P_R)]$.
Then $\de(G')\ge \de(G)-(L+2k)\frac{3}{2}pn\ge(\frac{k}{k+1}+\frac{\gamma}{2})n$. By Lemma \ref{covering}, $G'$ contains a family of vertex-disjoint
compatible $k$-th power of paths $P_1,\ldots, P_s$ for some $s\le \frac{1}{\la}$, each of length at least $\lambda n$, that cover all but at most $\tau n$
vertices of $G'$, and all the ends of them have at least $\beta_3 n^k$ mates in $V(G)$ and thus at least $\frac{\beta_3p^{k}}{2} n^k$ mates in $R$ by Claim~\ref{R}~\ref{e4}. By repeatedly applying Lemma~\ref{connecting} to $R$ with $(\frac{\ga}{2},\frac{\beta_3p^{k}}{2})$ in place of $(\ga,\beta)$ as above, we can iteratively connect $P_1,\ldots, P_s$ into a compatible $k$-th power of a path, say $P_{G'}$, via vertex-disjoint $k$-th power of paths of length at most $L$ in $R$. This can be done because in the process, we need to avoid a set of at most $(s-1)L<\min\{\frac{\gamma}{8}|R|,\frac{\be_3p^k}{8}|R|\}$ vertices that have been used in previous connections.

\noindent\textbf{Connecting the sandwich.}  Let $T$ the set of at most $\tau n$ vertices not covered by the paths $P_1,\ldots,P_s$ and write $T=\{v_1,v_2,\ldots,v_t\}$, where $t=|T|\le\tau n$. Note that $S:=R\cap V(P_{G'})$ has size $|S|\le sL\le \frac{L}{\la}$.
Thus by Lemma \ref{absorbing} and Claim \ref{R}~\ref{e3}, every vertex in $T$ has at least $\beta_1p^{2k}n/2-|S|\ge \beta_1p^{2k}n/4$ vertex-disjoint $\beta_2$-absorbers in $R\setminus V(P_{G'})$. We assign $v_1$ in $T$ a $\be_2$-absorber in $R\setminus V(P_{G'})$, say $A_1$ and denote by $A_1'$ the corresponding compatible copy of $P^{k}_{2k+1}$ induced by $A_1\cup\{v_1\}$, each end of which has at least $\beta_2n^k$ mates in $G$. Also, by applying Lemma~\ref{connecting} to $R$ twice, we can connect $P_R,P_{G'},A'_1$ in order and obtain a compatible $k$-th power of a path, say $P_{comb}$. This is because we only need to avoid a constant number of vertices used in previous connections. Note that both ends of $P_{comb}$ have at least $\beta_2n^k$ mates in $G$.

\noindent\textbf{Connecting the rest.} Note that $S_1:=R\cap V(P_{comb})$ has $|S_1|\le (s+2)L$. Then every vertex in $T$ has at least $\beta_1p^{2k}n/2-|S_1|\ge \beta_1p^{2k}n/4$ vertex-disjoint $\beta_2$-absorbers in $R\setminus V(P_{G'})$.
Now, since $|T|\le \tau n$ and $\tau\ll p,\be_1$, we can greedily assign every vertex $v_i$ in $T\setminus\{v_1\}$ a $\be_2$-absorber in $R\setminus S_1$, say $A_i$, such that these absorbers are vertex-disjoint. Similarly for every $i\in[2,t]$, we denote by $A_i'$ the corresponding compatible copy of $P^{k}_{2k+1}$ induced by $A_i\cup \{v_i\}$, each end of which has at least $\frac{\beta_2p^{k}}{2}n^k$ mates in $R$.

Next we shall connect $P_{comb},A'_2,\ldots,A_t'$ into compatible $k$-th power of a cycle.  Recall that $t\le \tau n$, $d_{R}(v)\ge(\frac{k}{k+1}+\frac{\gamma}{2})|R|$ for each $v\in V(G)$ and
 every end of $P_{comb},A'_2,\ldots,A_t'$ has at least $\frac{\beta_2p^k}{2}n^k$ mates in $R$.
Again, by applying Lemma \ref{connecting} to $R$ 
we iteratively connect $P_{comb},A'_2,\ldots,A_t'$ into compatible $k$-th power of a cycle via a collection of $t$ vertex-disjoint compatible $k$-th power of paths of length at most $L$. In fact, in each step $i, i\in[t]$, since $\tau\ll p,\be_2,\ga$ and thus the number of vertices in $R$ covered in previous steps is at most $|S_1|+iL\le 2L\tau n<\min\{\frac{\ga}{8}|R|, \frac{\be_2p^k}{8}|R|\}$, by Lemma \ref{connecting} one can obtain a path as desired. Let $C$ be the resulting cycle.
Finally, using property \ref{abs},
we absorb the leftover vertices in $V(G)-V(C)$ by the
absorbers $A_v$ in $\mathcal{C}$ and obtain
a compatible $k$-th power of Hamilton cycle, as desired.
\end{proof}

\section{Absorbing lemma} \label{sectionabsorbing}

\emph{Proof of Lemma \ref{absorbing}.} Given $\gamma>0$, we choose $$\frac{1}{n}\ll\mu, \be_1,\be_2\ll\eps,c\ll
d \ll\gamma, \frac{1}{k}.$$
For every $v$, we use $G_v$ to denote $G[N(v)]$. Firstly we apply Lemma \ref{RL} on $G_v$ with density $d$ and
obtain an
$(\varepsilon,d)$-regular partition $V (G_v) =V_0 \cup V_1 \cup\ldots\cup V_r$ for some $1/\varepsilon \le r \le M$
and $|V_i| =
m
\ge\frac{(1-\varepsilon)|G_v|}{r}$ for each $i \in [r]$. Let $R = R(\varepsilon, d)$
be the reduced graph for this partition. Note that

\begin{eqnarray*}
\delta(G_v) &\ge&(\frac{k}{k+1}+\gamma)n-(n-|G_v|) \\
 &\ge& |G_v|-\frac{1}{k+1}n+\gamma n \\
 &\ge& (1-\frac{1}{k})|G_v|+\gamma n
\end{eqnarray*}
The last inequality follows since $|G_v|\ge (\frac{k}{k+1}+\gamma)n$.  As $\varepsilon\ll d\ll\gamma$, we have
$\delta(R) \geq\big(1-\frac{1}{k}+\gamma-d-2\varepsilon\big)r\ge\big(\frac{k-1}{k} +
\frac{\gamma}{2}\big)r$.
It follows that every $k$ vertices in $R$ have at least $\gamma r$ common neighbors. Hence we can greedily find a copy of $P^k_{2k}$ in $R$, denoted by $H$. Let $V(H)=\{u_1,\ldots, u_{2k}\}$, and $V_1,\ldots, V_{2k}$
be the
corresponding clusters.  By Definition \ref{reducedgraph}, $(V_i, V_j)$ is $(\varepsilon,d)$-regular if $u_iu_j$ is an edge of $H$ in $R$. By Corollary \ref{co1}, there exists $c>0$ such that $G_v[V_1\cup\ldots\cup V_{2k}]$ contains at least $cm^{2k}\ge c'n^{2k}$
compatible copies of $P^k_{2k}$, where $c'$ is a constant depending on $\eps$ and $c$.

We claim that for any $0<\beta_2<c'/2$, there exists a family $\mathcal{P}$ of at least $\frac{c'}{2}n^{2k}$ compatible copies of $P^k_{2k}$ such that every end of
them has at least $\beta_2 n^{k}$
mates in $G$. For otherwise, let $x<\frac{c'}{2}n^{2k}$ be the number of compatible copies of $P^k_{2k}$ such that every end of
them has at least $\beta_2 n^{k}$ mates. Then the number of $k$-tuples which have at least $\beta_2 n^{k}$ mates is at least $\frac{x}{n^k}$, and so the number of $k$-tuples which have less than $\beta_2 n^{k}$ mates is at most $n^k-\frac{x}{n^k}$. Hence the total number of compatible copies of $P^k_{2k}$ in $G_v[V_1\cup\ldots\cup V_{2k}]$ is at most

$$x+(n^{k}-\frac{x}{n^k})\cdot\beta_2n^k<(\frac{c'}{2}+\beta_2-\frac{c'}{2}\beta_2)n^{2k}<c'n^{2k},$$
a contradiction.

Among those copies in $\mathcal{P}$, at least $(\frac{c'}{2}-2\mu)n^{2k}$ of them are $\beta_2$-absorbers for $v$. Indeed, since $(G, \mathcal{F})$ is an $(n,\frac{k}{k+1} + \gamma, \mu)$-incompatibility system,
there are at
most $\mu n^2$
incompatible pairs ${vv_1, vv_2}$ and for each such pair ${vv_1, vv_2}$, there are at most $n^{2k-2}$
copies of $P^k_{2k}$ containing $v_1$ and
$v_2$.  Moreover, at most $\mu n^2\cdot n^{2k-2}$ copies contain an edge $v_1v_2$ which is incompatible with
$vv_1$. Hence we obtain at least
$(\frac{c'}{2}-2\mu)n^{2k}$ $\beta_2$-absorbers for $v$.
As $\mu,\beta_1\ll \eps,c$, we can greedily find $\beta_1 n$ vertex-disjoint
$\beta_2$-absorbers for $v$.

\section{Almost cover}\label{sectioncovering}

\pr[Proof of Lemma \ref{covering}] Given $\gamma,\tau>0$ and $k\in\N$, we choose \[\frac{1}{n}\ll\mu,\be,\lambda\ll\eps,c\ll
\eta,d\ll\gamma,\tau,\frac{1}{k}\] and fix $(G, \mathcal{F})$ to be an $(n,\frac{k}{k+1} + \gamma, \mu)$-incompatibility system.
We apply Lemma \ref{RL} on $G$ to obtain an
$(\varepsilon,d)$-regular partition $V (G) =V_0 \cup V_1 \cup\ldots\cup V_r$ for some $1/\varepsilon \le r
\le M$ and $|V_i|
=m\ge\frac{(1-\varepsilon)n}{r}$ for each $i \in [r]$. Let $R = R(\varepsilon, d)$ be the reduced graph
for this partition. Since $\varepsilon\ll
d\ll\gamma$, we have
$\delta(R) \geq\big(\frac{k}{k+1} + \gamma-d-2\varepsilon\big)r\ge\big(\frac{k}{k+1} +
\frac{\gamma}{2}\big)r$. By the Hajnal--Szemer\'{e}di theorem \cite{HS}, $R$ has a $K_{k+1}$-tiling
$\mathcal{K} = \{K^{(1)},K^{(2)},\ldots,
K^{\lfloor\frac{r}{k+1}\rfloor}\}$ covering all but at most $k$ vertices. Next, we shall pick vertex-disjoint compatible $k$-th power of long paths within the corresponding $k+1$ clusters for every $K^{(i)}\in \mathcal{K}$. Without loss of generality, we may take $K^{(1)}$ for instance and assume that the corresponding clusters of $K^{(1)}$ are $V_1,\ldots, V_{k+1}$. It suffices to prove the following result.
\begin{claim}\label{c1}
  For any collection of subsets $U_i\<V_i, i\in[k+1]$ with $|U_i|\ge \frac{\eta}{2}m$, there exists, in $G[U_1,\ldots, U_{k+1}]$, a compatible copies of $P^k_{s}$ which satisfies the following properties:
  \stepcounter{propcounter}
\begin{enumerate}[label = ({\bfseries \Alph{propcounter}\arabic{enumi}})]
       \item\label{p1} $s\in (k+1)\N$ and $s\ge \lambda n$;
       \item\label{p2} both ends have at least $\be n^k$ mates.
\end{enumerate}
\end{claim}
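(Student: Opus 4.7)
The plan is to build the desired compatible $k$-th power of a path greedily inside $G[U_1, \ldots, U_{k+1}]$. Since the $k$-th power of a base path living in a $(k+1)$-partite graph naturally occupies the clusters in a cyclic pattern, the construction is to start from a compatible $P^k_{2k}$ whose two ends already have many mates and then extend one vertex at a time, cyclically through $U_1, U_2, \ldots, U_{k+1}, U_1, \ldots$, using regularity to supply many candidate extensions and $\mu n$-boundedness to rule out only few of them.

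First, I would tighten the hierarchy to $\frac{1}{n} \ll \mu, \be, \lambda \ll \eps \cdot \eta \ll c \ll \eta, d \ll \ga, \tau, \frac{1}{k}$ so that, even after the final path is removed, each pair of (unused portions of) clusters stays $(\eps', d')$-regular via the slicing lemma (Fact~\ref{slicing}), and Fact~\ref{fact1} keeps its quantitative bite. To produce the initial segment $P_0$, I would apply Corollary~\ref{co1} with $H = P^k_{2k}$ under the natural cyclic cluster assignment; this yields $\Omega(m^{2k})$ compatible copies of $P^k_{2k}$ in $G[U_1, \ldots, U_{k+1}]$, and by the same counting argument used in the proof of Lemma~\ref{absorbing}, most of them have both $k$-tuple ends admitting at least $\beta n^k$ mates in $G$, so I choose one such $P_0$.

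For the greedy extension step, suppose we already have a compatible $P^k_s$ with $s \geq 2k$ whose moving end is $\mathbf{e} = (u_1, \ldots, u_k)$, one vertex in each of $k$ consecutive clusters, and let $U_j^{\mathrm{rem}}$ denote the unused vertices in the remaining cluster $V_j$. I want to find $v \in U_j^{\mathrm{rem}}$ adjacent to every $u_i$, with each new edge $vu_i$ compatible at $u_i$ with all existing path edges through $u_i$. By Fact~\ref{fact1} applied at the time $u_i$ was selected, together with the slicing lemma (so typicality survives the bounded deletions), the common neighbourhood of $\{u_1, \ldots, u_k\}$ in $U_j^{\mathrm{rem}}$ has size at least $(d - 2\eps)^k |U_j^{\mathrm{rem}}| \geq (d - 2\eps)^k \eta m / 4$, whereas incompatibility excludes at most $2k^2 \mu n$ candidates in total, since each $u_i$ lies in at most $2k$ existing path edges and each such edge has at most $\mu n$ incompatible partners at $u_i$. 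Since $\mu \ll d, \eta, \frac{1}{k}$, many valid $v$ remain, and I pick one that is itself typical into every cluster that will host its $k - 1$ future neighbours, again by Fact~\ref{fact1}. I extend simultaneously from both ends of $P_0$ until the total length equals the smallest multiple of $k+1$ that is at least $\lambda n$.

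Finally, to certify that each end $\mathbf{e}$ of the completed path has at least $\beta n^k$ mates in $G$, I would apply Corollary~\ref{co1} with $H$ encoding $P^k_{2k}$, pinning the vertices of $\mathbf{e}$ as singleton clusters and taking as the $k$ fresh clusters the unused portions of the cyclically next $k$ clusters; this gives $\Omega(m^k) \geq \beta n^k$ compatible extensions of $\mathbf{e}$ into $P^k_{2k}$, which are exactly the mates. The main obstacle is the bookkeeping required to ensure that every greedy choice $u_i$ remains typical into each of the (at most $k$) clusters hosting its future neighbours, even after many subsequent deletions; I would handle this by imposing typicality into all relevant clusters as part of the selection criterion at the moment $u_i$ is picked, sacrificing only an $O(k\eps)$ fraction of candidates per step, an affordable loss given the gap between $\mu$ and $d, \eta$.
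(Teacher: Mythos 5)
Your greedy extension step has a genuine gap at its core: when you append a vertex $v$ to the end $\mathbf{e}=(u_1,\ldots,u_k)$, the extended graph is a compatible $k$-th power of a path only if, in addition to each $vu_i$ being compatible at $u_i$ with the existing path edges (which you do handle, losing $O(k^2\mu n)$ candidates), the $k$ new edges $vu_1,\ldots,vu_k$ are \emph{pairwise compatible at $v$}. The $\mu n$-boundedness condition gives no control over how many common neighbours $v$ fail this: for a fixed pair $u_i,u_j$, the system may declare $\{vu_i,vu_j\}\in F_v$ for \emph{every} $v$, and this is only $1$-bounded since each such pair lives at a different vertex $v$. This is exactly the mechanism of the space-barrier construction in the introduction, and it is the ``worst-case scenario'' the paper flags: an end of a compatible power of a path that cannot be extended at all. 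So your candidate count $(d-2\eps)^k|U_j^{\mathrm{rem}}|-2k^2\mu n$ does not survive, and nothing in your argument prevents the greedy process from steering itself into such a dead end. The paper avoids this by never reasoning edge-by-edge at the new vertex: it first uses the compatible counting lemma (Corollary~\ref{co1}) to get $\Omega(n^{k+1})$ compatible copies of $K_{k+1}$ in $G[U_1,\ldots,U_{k+1}]$, encodes them as a $(k+1)$-uniform hypergraph, and cleans it so that every $k$-set lies in either $0$ or at least $\tfrac{c_1}{2}n$ hyperedges. Extending only along hyperedges of the cleaned hypergraph makes the new $(k+1)$-set a compatible clique by construction (so compatibility at $v$ is free), and only the $k^2\mu n$ vertices that clash with older path edges at the $u_i$'s need to be discarded.

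The same gap reappears in your mate-certification step, together with a technical misuse: Corollary~\ref{co1} requires $|U_i|\ge\eta n$, so you cannot ``pin'' the end vertices as singleton clusters, and even if you could, regularity of the fresh clusters says nothing about compatibility of the new edges with the pinned edges at the pinned vertices. The paper instead gets property~\ref{p2} directly from the cleaned hypergraph: every end is a $k$-set of a surviving hyperedge, hence has at least $\tfrac{c_1}{2}n$ one-vertex extensions at each of $k$ successive steps, giving $(\tfrac{c_1}{4}n-k^2\mu n)^k\ge\beta n^k$ mates. Your starting-segment and length-bookkeeping ideas are fine, but without something playing the role of the cleaned hypergraph of compatible cliques, the argument does not go through.
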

Thus taking this for granted, we can greedily pick a family of vertex-disjoint compatible copies of $P^k_{s}$, which altogether leave less than $\frac{\eta}{2}m$ vertices in each $V_i$ ($ i\in [k+1]$) uncovered. Applying this for every $K^{(i)}\in \mathcal{K}$, we obtain a desired family of vertex-disjoint compatible copies of $P^k_{s}$ and the number of vertices uncovered is at most \[km+\frac{\eta}{2}m\cdot r+\varepsilon n<\tau n.\]

\pr[Proof of Claim~\ref{c1}]\renewcommand*{\qedsymbol}{$\blacksquare$}
By Corollary~\ref{co1} applied with $h=k+1$, there exists $c=c(k,d)>0$ such that the $(k+1)$-partite graph $G[U_1,\ldots,U_{k+1}]$ contains at least $c(\frac{\eta}{2}m)^{k+1}\ge c_1 n^{k+1}$ compatible copies of $K_{k+1}$ for a constant $c_1>0$.
We construct an auxiliary $(k+1)$-uniform $(k+1)$-partite hypergraph $H$ on $V(H):=U_1\cup\ldots\cup U_{k+1}$ with
\[E(H):=\big\{\{v_1,v_2,\ldots,v_{k+1}\}: v_i\in V_i,~i\in[k+1],\textrm{ and } v_1,\ldots, v_{k+1}\textrm{ induce a compatible copy of }
K_{k+1}\big\}.\]
Thus $H$ has at least $c_1n^{k+1}$ hyperedges.
Now we claim that $H$ has a subgraph $H'$ such that every $k$-set is contained in either at least $\frac{c_1}{2}n$ hyperedges or zero hyperedge. In fact, we can proceed by iteratively removing the edges from $H$ as follows. If a $k$-set $S$ is contained in less than $\frac{c_1}{2} n$ hyperedges in the current hypergraph, then we remove all hyperedges containing $S$. Note that in the process, we remove at most $\frac{c_1}{2}n\cdot m^k\cdot(k+1)\le\frac{c_1}{2}n^{k+1}$ edges in total. The process terminates at a nonempty hypergraph $H'$ as desired.

Now we pick a longest compatible $k$-th power of a path, denoted as $P=u_1\ldots u_{s'}$ in $G[U_1,\ldots, U_{k+1}]$, such that for every $i\in[s'-k]$, $\{u_i,\ldots,u_{i+k}\}$ is a hyperedge of $H'$. Suppose for contradiction that $s'<2\lambda n$. Note that in $H'$, the $k$-set $S=\{u_{s'-k+1},\ldots, u_{s'}\}$ has a set of at least $\frac{c_1}{2} n-s'\ge \frac{c_1}{4} n$ neighbors in $V(H')\setminus V(P)$, denoted as $A$. In particular, as $\mathcal{F}$ is $\mu n$-bounded, there are at most $k^2\mu n$ vertices $v$ such that for some $i\in\{s'-k+1,\ldots,s'\}$ and $j\in\{i-k,\ldots,i-1\}$, the edge $u_{i}v$, $u_{i}u_{j}$ are compatible at $u_i$. By the choice of $\mu\ll \eps$ and thus $|A|>k^2\mu n$, we can pick a vertex from $A$ to extend $P$ as required, a contradiction. Thus by consecutively removing vertices from one end of $P$, we can obtain a compatible $k$-th power of a subpath on $s$ vertices with $s\in (k+1)\N$ and $s\ge \lambda n$.

It remains to prove \ref{p2}. In fact, we can extend every $k$-set from a hyperedge to at least $(\frac{c_1}{4}n-k^2\mu n)^k\ge \be n^k$ compatible copies of $P^k_{2k}$ as above. This is because in each step, we have at least $\frac{c_1}{4}n-k^2\mu n$ choices for the next vertex. This completes the entire proof.
\oof
\oof
\section{Connecting two ends}\label{sectionconnecting}
We will make use of a result of Koml\'{o}s--S\'{a}rk\"{o}zy--Szemer\'{e}di~\cite{Komlos1998On}.
\begin{lemma}\label{KSS98}\cite{Komlos1998On} For every $\ga>0$ and $k\in \N$, there exists $L\in \N$ such that the following holds for sufficiently large $n\in \N$. Let $R$ be an $n$-vertex graph with $\delta(R)\ge(\frac{k}{k+1}+\ga)n$ and $\mf{e}_1,\mf{e}_2$ be two disjoint $k$-tuples of vertices, each of which induces a copy of $K_k$. Then there exists a $k$-th power of a path $P$ of length
at most $L$, whose ends are $\overf{e}_1$ and $\overf{e}_2$.
\end{lemma}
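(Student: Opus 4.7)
The plan is to prove this classical connecting lemma by applying the Regularity Lemma to $R$ and routing greedy extensions through a Hajnal--Szemer\'edi $K_{k+1}$-factor of the reduced graph.

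First, I would apply Lemma~\ref{RL} to $R$ with parameters $\varepsilon \ll d \ll \gamma$, obtaining an $(\varepsilon,d)$-regular partition $V(R)=V_0\cup V_1\cup\ldots\cup V_t$ and a pure subgraph $R'$ whose reduced graph $\Gamma$ satisfies $\delta(\Gamma)\ge \bigl(\tfrac{k}{k+1}+\tfrac{\gamma}{2}\bigr)t$. By the Hajnal--Szemer\'edi theorem, $\Gamma$ contains a $K_{k+1}$-factor covering all but at most $k$ clusters. The same minimum degree condition on $\Gamma$ forces $\Gamma$ to have constant diameter, so any two $K_{k+1}$'s of the factor are joined by a path of length $O(1)$ in $\Gamma$.

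Second, I would establish the basic extension estimate: any $k$-clique $(w_1,\ldots,w_k)$ in $R$ has common neighbourhood of size at least $n-k\bigl(\tfrac{1}{k+1}-\gamma\bigr)n=\bigl(\tfrac{1}{k+1}+k\gamma\bigr)n$, so a $k$-th power of a path terminating in $(w_1,\ldots,w_k)$ admits $\Omega(n)$ one-vertex extensions $w_{k+1}$, each producing a new terminal $k$-clique $(w_2,\ldots,w_{k+1})$. Because there are only constantly many clusters, pigeonhole lets me insist that $w_{k+1}$ come from any specified cluster of sufficient size, while avoiding any bounded forbidden set.

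Third, I would iteratively extend both $\mathbf{e}_1$ and $\mathbf{e}_2$ for a constant number of steps, using the constant-diameter property of $\Gamma$ to steer each extended path through a prescribed sequence of $K_{k+1}$'s of the factor and to land in a common target $K_{k+1}$ $\{V_{j_1},\ldots,V_{j_{k+1}}\}$. Finally, with both extended endpoints sitting inside regular clusters of the same $K_{k+1}$, I would complete the connection by embedding a $k$-th power of a short path that cycles through $V_{j_1},\ldots,V_{j_{k+1}}$, which is standard for a $(k+1)$-partite regular structure of density $\ge d$ (via the Blow-up Lemma or a greedy embedding using Fact~\ref{fact1}). Concatenating the three pieces produces a $k$-th power of a path of length $O(1)$ with ends $\overf{e}_1$ and $\overf{e}_2$, and setting $L$ as the resulting total length yields the lemma.

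The main obstacle is the third step: carefully steering the greedy extensions into a prescribed cluster configuration while staying on the $K_{k+1}$-factor skeleton. This requires exploiting not only that each extension step has $\Omega(n)$ choices but also that the bounded diameter of $\Gamma$ bounds the number of ``hops'' between $K_{k+1}$'s needed, so the total path length can be made depending only on $\gamma$ and $k$.
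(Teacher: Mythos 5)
First, a point of comparison: the paper does not prove Lemma~\ref{KSS98} at all; it is imported verbatim from Koml\'os--S\'ark\"ozy--Szemer\'edi \cite{Komlos1998On} and used as a black box (indeed, in the proof of Lemma~\ref{connecting1} it is applied to a reduced graph). So your attempt is being measured against their original argument, not against anything in this paper.

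Your proposal has a genuine gap at the steering step --- exactly the step you flag as ``the main obstacle''. The extension estimate is correct: the common neighbourhood $N$ of the terminal $k$-clique has size at least $\bigl(\frac{1}{k+1}+k\gamma\bigr)n$, so the power of a path can always be extended greedily. But $|N|$ is only barely more than $n/(k+1)$, so $N$ may be entirely contained in a $\bigl(\frac{1}{k+1}+k\gamma\bigr)$-fraction of the clusters and miss every other cluster completely; pigeonhole only yields that \emph{some} cluster meets $N$ in $\Omega(n/t)$ vertices, not that a \emph{prescribed} one does. Worse, adjacency in the reduced graph $\Gamma$ gives no mechanism to move the endpoint where you want: $(V_i,V_j)$ being $(\varepsilon,d)$-regular controls neighbourhoods of \emph{typical} vertices of $V_i$ in $V_j$ (Fact~\ref{fact1}), but says nothing about the common neighbourhood in $V_j$ of the $k$ \emph{specific} vertices currently ending your path, which may well be empty. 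Consequently the bounded diameter of $\Gamma$ and the Hajnal--Szemer\'edi skeleton do not let you walk the terminal clique from wherever the greedy extension happens to land to the target $K_{k+1}$; and reducing the routing to a connection problem between cliques of $\Gamma$ would just reproduce the statement of the lemma one level up. (A secondary, related issue: the final blow-up/greedy completion inside the target $K_{k+1}$ requires the two terminal cliques to consist of vertices typical towards the target clusters, which also has to be arranged.) This steering difficulty is precisely what makes the connecting lemma nontrivial; Koml\'os--S\'ark\"ozy--Szemer\'edi avoid it by a different route, roughly by showing that the family of terminal $k$-cliques reachable from $\mf{e}_1$ within boundedly many extension steps grows until it occupies a positive proportion of all $k$-cliques, doing the same from $\mf{e}_2$, and then linking the two large families directly by counting, rather than by routing through a regularity partition.
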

\pr[Proof of Lemma \ref{connecting1}] For any $\beta, \gamma > 0$, we choose \[\frac{1}{n}\ll\mu\ll\eps,c\ll d\ll\beta,\gamma,\frac{1}{k},~\text{and additionally}~\frac{1}{L}\ll \ga .\]
We fix
$(G, \mathcal{F})$ to be an $(n,\frac{k}{k+1} + \gamma, \mu)$-incompatibility system, and $W\subseteq
V(G)$
with $|W|<\min\{\frac{\beta}{2}n,\frac{\gamma}{2}n\}$, and $\mf{e}_i,\mathcal{M}_i$ with $|\mathcal{M}_i|\ge \beta
n^{k}, i=1,2$. Without loss of generality, we write
\[ ~\mf{e}_1=(u_1,\ldots,u_k)~\text{and}~\mf{e}_2=(v_1,\ldots,v_k).\]
Let $V'=V(G)\setminus (W\cup V(\mf{e}_1)\cup V(\mf{e}_2))$. For $i\in[2]$, we define the family
\[\mathcal{H}_i=\{\mf{f}\in \mathcal{M}_i: \mf{f}~\text{lies inside}~V'\}.\]
Then by the choice of $|W|<\min\{\frac{\gamma}{2}n,\frac{\beta}{2}n\}$, it is easy to see that $|\mathcal{H}_i|\ge \frac{\be}{4} n^k$ for $i\in[2]$. We then uniformly and randomly partition $V'$ into $2k$ parts of nearly equal size, denoted as $U_{1},U_{2},\ldots, U_{2k}$. Let $\mathcal{X}_1$ (or $\mathcal{X}_2$) be the family of mates $\mf{f}=(f_1,\ldots,f_k)$ in $\mathcal{H}_1$ (resp. in $\mathcal{H}_2$) with $f_i\in U_{i}$ for $i\in[k]$ (resp. $f_i\in U_{k+i}$ for $i\in[k]$). It follows that $\mathbb{E}(|\mathcal{X}_i|)\ge\frac{\beta}{4(2k)^k} n^k$. Then by a standard application of Janson's inequality\footnote{Similar to the proof of Claim~\ref{R}}, there exists a partition such that $|\mathcal{X}_i|\ge \frac{1}{2}\mathbb{E}(|\mathcal{X}_i|)\ge\frac{\beta}{8(2k)^k} n^k$ for every $i\in[2]$. Let $U_i'=V(\mathcal{X}_1)\cap U_i$ and $U_{k+i}'=V(\mathcal{X}_2)\cap U_{k+i}$ for every $i\in[k]$. Then it is easy to observe the following properties:
  \stepcounter{propcounter}
\begin{enumerate}[label = ({\bfseries \Alph{propcounter}\arabic{enumi}})]
       \item\label{p3} $|U'_i|\ge\frac{\beta}{8} n$ for every $i\in[2k]$;
       \item\label{p4} Combined with $\mf{e}_1$, every copy of $K_k$ in the resulting $k$-partite graph $G[U_1',U_2',\ldots,U_k']$ can form a copy of $P^k_{2k}$. The same assertion also holds for $\mf{e}_2$ and $G[U_{k+1}',U_{k+2}',\ldots,U_{2k}']$.
       \item\label{p41} Moreover, for every $i\in[k]$ and $x\in U_i'$, the edges $xu_i,xu_{i+1},\ldots,xu_k$ are pairwise compatible at $x$; similarly, for every $j\in[k]$ and $y\in U_{k+j}'$, the edges $yv_j,yv_{j+1},\ldots,yv_k$ are pairwise compatible at $y$.
\end{enumerate}
It is worth a remark that \ref{p41} follows from the existence of a mate $\mf{f}$ for $\mf{e}_1$ (resp. for $\mf{e}_2$) whose $i$-th coordinate is $x$.

Let $G'=G[V']$, and $n'=|V'|$. Then $\de(G')\ge \de(G)-|W|-2k\ge (\frac{k}{k+1} + \frac{\gamma}{3})n$.
We apply Lemma \ref{RL} to $G'$ to refine the current partition $\{U'_{1},U'_{2},\ldots, U'_{2k}, V'\setminus \bigcup_{i=1}^{2k}U'_i\}$. Denote the resulting $(\eps,d)$-regular partition by $\mathcal{P} =\{V_0, V_1,\ldots,V_r\}$ for some $1/\varepsilon \le r
\le M_{\eps}$, where $|V_0|\le \eps n$ and all clusters $V_i$ with $i\in[r]$ have the same size, denoted as $m$.
Let $R = R(\varepsilon, d)$ be the reduced graph for this partition. Then as $\varepsilon\ll d\ll\gamma$, we obtain that
$\delta(R) \geq\big(\frac{k}{k+1}+\frac{\gamma}{3}-d-2\varepsilon\big)r\ge\big(\frac{k}{k+1} +
\frac{\gamma}{4}\big)r$.

We claim that there exists a copy of $K_k$ in $R$, such that each of the corresponding $k$ clusters comes from a different part $U_i'$ where $i\in[k]$. For otherwise, by removing the edges (of $G'$) between irregular pairs, pair with density less than $d$, and pairs incident with $V_0$, we obtain a subgraph of the $k$-partite graph $G[U_1',U_2',\ldots,U_k']$ which does not contain any copy of $K_k$. Thus the total number of copies of $K_k$ in $G[U_1',U_2',\ldots,U_k']$ is at most
\[(\varepsilon r^2\cdot m^2+r^2\cdot d\cdot m^2+\varepsilon n'\cdot n) n'^{k-2}\le(2\varepsilon +d)n^k<\frac{\beta}{8(2k)^k} n^k\le |\mathcal{X}_1|,\]
yielding a contradiction. Similarly, we can find another copy of $K_k$ in $R$, such that each of the corresponding $k$ clusters comes from a different part $U_i$ where $i\in[k+1,2k]$. Without loss of generality, we may then assume that $V_i\<U_i'$ for every $i\in[2k]$ and $\mf{f}_1:=(V_1,V_2,\ldots, V_k)$, $\mf{f}_2:=(V_{k+1},V_{k+2},\ldots, V_{2k})$ induce two copies of $K_k$ in $R$.

By Lemma~\ref{KSS98} applied to $R$,  there exists a copy of $P^k_{\ell}$ with $2k\le\ell\le L$, whose ends are $\overf{f}_1$ and $\overf{f}_2$. Suppose $V_1,\ldots,V_{\ell}$ are the corresponding clusters in its base path.  Applying Corollary \ref{co1} to $G'$ with $h=\ell, H=P^k_{\ell}$, $U_i=V_i$ for every $i\in[\ell]$ and combining the choice of $\mu\ll\eps\ll d$, we obtain a family $\mathcal{K}$ of at least $cm^{\ell}$ 
compatible copies of $P^k_{\ell}$ (in $G'$) for some $c=c(\ell,d)>0$. Moreover, every two ends of such a copy in $\mathcal{K}$ respect the orderings of $\overf{f}_1$ and $\overf{f}_2$, respectively. By property~\ref{p4}, we shall pick a copy of $P^k_{\ell}$ in $\mathcal{K}$ such that we can extend it to a desired compatible copy of $P_{\ell+2k}^k$ whose ends are $\overf{e}_1$ and $\overf{e}_2$, which completes the entire proof.

Recall that $\mf{e}_1=(u_1,\ldots,u_k)$ and $\mf{e}_2=(v_1,\ldots,v_k)$. For any fixed copy of $P^k_{\ell}$ in $\mathcal{K}$, its base path is denoted as $Q=w_1w_2\ldots w_{\ell}$ such that the path $u_1\ldots u_kw_1w_2\ldots w_{\ell}v_k\ldots v_1$ forms a copy of $P_{\ell+2k}^k$. Note that $w_i\in U_i'$ and $w_{\ell-i+1}\in U_{k+i}'$ for every $i\in[k]$. Combining \ref{p41} and the fact of $\ell\ge 2k$, we obtain that if $Q$ fails to connect the ends $\overf{e}_1$ and $\overf{e}_2$ into a compatible copy of $P_{\ell+2k}^k$, then one of the following conditions holds:
  \stepcounter{propcounter}
\begin{enumerate}[label = ({\bfseries \Alph{propcounter}\arabic{enumi}})]
       \item\label{p5} There exist some $u_i$ ($i\in[k]$) and a pair $\{w_s,w_t\}$ with $s,t\in[\ell]$ such that either $\{u_iw_s, u_iw_t\}$, or $\{u_iw_s, w_sw_t\}$ forms an incompatible pair in $(G,\mathcal{F})$.
       \item\label{p6} There exist some $v_j$ ($j\in[k]$) and a pair $\{w_s,w_t\}$ with $s,t\in[\ell]$ such that either $\{v_jw_s, v_jw_t\}$, or $\{v_jw_s, w_sw_t\}$ forms an incompatible pair in $(G,\mathcal{F})$.
\end{enumerate}
Note that for any such $u_i$ (or $v_j$) as in \ref{p5} (resp.~\ref{p6}), the number of pair $(w_s,w_t)$ is at most $2\mu n^2$.
Hence the number of copies of $P^k_{\ell}$ in $\mathcal{K}$ satisfying \ref{p5} or \ref{p6} is at most \[2k(2\mu n^2+2\mu n^2)n^{\ell-2}<cm^{\ell}\le|\mathcal{K}|,\] where the first inequality follows since $\mu\ll\eps,c,\frac{1}{k}$. Thus we can pick from $\mathcal{K}$ a compatible copy of $P^k_{\ell}$ with a base path $Q$ as desired. This completes the entire proof.
\oof

\section{Acknowledgement}
The authors would like to thank Jie Han  and Guanghui Wang for helpful discussions.

\bibliographystyle{abbrv}
\bibliography{ref}

\end{document}